\newenvironment{cenv}{\begin{list}{}{%
      \setlength{\labelwidth}{1.5em}%
      \setlength{\leftmargin}{\labelwidth}%
      \addtolength{\leftmargin}{\labelsep}%
      \setlength{\listparindent}{0em}%
      \setlength{\topsep}{10pt}%
      \setlength{\itemsep}{5pt}%
      \setlength{\parsep}{0pt}%
    }
  }{
  \end{list}
}
\newcounter{claimcounter}
\newenvironment{Claim}{
  
  \refstepcounter{claimcounter}
  \begin{cenv}
  \item[{Claim \arabic{claimcounter}.}]
  }{
  \end{cenv}
}
\newtheorem{rmrk}[theorem]{Remark}
\newcommand{\minor}{\preceq}
\newcommand{\N}{{\mathbb N}}
\newcommand{\CCC}{\mathcal{C}}
 \newcommand{\XXX}{\mathcal{X}}
\newcommand{\YYY}{\mathcal{Y}}
\newcommand{\Oof}{\mbox{$\cal O$}}
\newcommand{\rad}{\mathrm{rad}}
\newcommand{\wcol}{\mathrm{wcol}}
\newcommand{\Wreach}{\mathrm{WReach}}
\newcommand{\Sreach}{\mathrm{SReach}}
\newcommand{\col}{\mathrm{col}}
\newcommand{\adm}{\mathrm{adm}}
\newcommand{\dist}{\operatorname{dist}}
\newcommand{\NOdM}{Ne{\v s}et{\v r}il and Ossona de Mendez}
\renewcommand{\phi}{\varphi}
\renewcommand{\epsilon}{\varepsilon}
\newcommand{\ie}{i.e.\@\xspace}
\newcommand{\WLOG}{w.l.o.g.\@\xspace}
\renewcommand{\mid}{~:\ }
\newcounter{rbcounter}
\DeclareMathOperator{\tw}{tw}
\DeclareMathOperator{\td}{td}
\title{Colouring and Covering Nowhere Dense Graphs\thanks{A conference version of 
this paper was published at the 41st International Workshop on Graph-Theoretic Concepts in Computer Science, WG 2015. The jounal version contains new additional results. 
Stephan Kreutzer and Roman Rabinovich's 
research has been supported by the
European Research Council (ERC) under the European Union’s Horizon
2020 research and innovation programme (ERC Consolidator Grant
DISTRUCT, grant agreement No 648527). The work of Sebastian Siebertz is supported by the National Science Centre of Poland via POLONEZ grant agreement UMO-2015/19/P/ST6/03998, 
which has received funding from the European Union's Horizon 2020 research and 
innovation programme (Marie Sk\l odowska-Curie grant agreement No.\ 665778).}}
\author{Martin Grohe\thanks{RWTH Aachen University, \texttt{grohe@informatik.rwth-aachen.de}}
\and  Stephan Kreutzer\thanks{Technische Universit\"at Berlin, \texttt{stephan.kreutzer@tu-berlin.de}} \and
Roman Rabinovich\thanks {Technische Universit\"at Berlin, \texttt{roman.rabinovich@tu-berlin.de}}\and \newline
Sebastian Siebertz\thanks{University of Warsaw, \texttt{siebertz@mimuw.edu.pl}}\and
Konstantinos Stavropoulos\thanks{Universit\"at Hamburg, \texttt{konstantinos.stavropoulos@uni-hamburg.de}}}
\begin{document}
\maketitle
\begin{textblock}{5}(12.05, 12.2)
\includegraphics[width=30px]{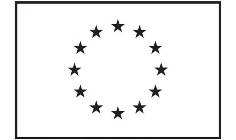}%
\end{textblock}

\begin{abstract}
  In \cite{GKS14} it was shown that nowhere dense classes of graphs
  admit sparse \emph{neighbourhood covers} of small degree. We
  show that a monotone graph class admits sparse neighbourhood covers
  if and only if it is nowhere dense.  The existence of such covers
  for nowhere dense classes is established through bounds on so-called
  \emph{weak colouring numbers.}
  The core results of this paper are various lower and upper bounds on
  the weak colouring numbers and other, closely related generalised
  colouring numbers. We prove tight bounds for these numbers on graphs
  of bounded treewidth. We clarify and tighten the relation between
  the density of shallow minors and the various generalised colouring
  numbers. These upper bounds are complemented by new, stronger
  exponential lower bounds on the weak and strong colouring numbers, 
  and by super-polynomial lower bounds on the weak colouring numbers on 
  classes of polynomial expansion. 
  Finally, we show that computing weak $r$-colouring numbers is
  NP-complete for all $r\geq 3$.
\end{abstract}

\section{Introduction}

Nowhere dense classes of graphs have been introduced by  \NOdM~\cite{NesetrilOdM12,NesetrilO11} as a general model
of \emph{uniformly sparse graph classes}. They include and generalise many other
natural  sparse graph classes, among them all classes of bounded degree,
classes of bounded genus, classes
defined by excluded (topological) minors, and classes of bounded
expansion. It has been demonstrated in several papers, e.g.,
\cite{DK09,EickmeyerGKKPRS17,GKS14,KreutzerRS17,NesetrilOdM12} that nowhere dense graph classes have
nice algorithmic properties; many problems that are hard in general
can be solved (more) efficiently on nowhere dense graph classes. In fact, nowhere dense classes are a natural limit for the
efficient solvability of a wide class of problems
\cite{DvorakKT13,GKS14,Kreutzer11}.

In \cite{GKS14}, it was shown that nowhere dense classes of graphs
  admit sparse \emph{neighbourhood
    covers}. Neighbourhood covers play an important role in the study
  of distributed network algorithms and other application areas (see
  for example~\cite{peleg00}). The
  neighbourhood covers developed in~\cite{GKS14} combine low radius
  and low degree making them interesting for the applications outlined
  above.  In this paper, we prove a (partial) converse to the result
  of \cite{GKS14}: we show that monotone graph classes (that is,
  classes closed under taking subgraphs) are nowhere dense if and only
  if they admit sparse neighbourhood covers. A similar characterisation result was 
  proved for classes of bounded expansion~\cite{NesetrilM15}.

Nowhere denseness has turned out to be a very robust property of graph
classes with various seemingly unrelated characterisations (see
\cite{GKS13,NesetrilOdM12}), among them characterisations through
so-called \emph{generalised colouring numbers}. These are particularly
relevant in the algorithmic context, because the existence of sparse
neighbourhood covers for nowhere dense classes is established through
such colouring numbers---the \emph{weak $r$-colouring numbers}, to be
precise---and the value of these numbers is directly related to the
degree of the neighbourhood covers. Besides the weak $r$-colouring numbers
$\wcol_r(G)$ of graphs $G$ we study the
\emph{$r$-colouring numbers} $\col_r(G)$ and the \emph{$r$-admissibility
  numbers} $\adm_r(G)$. The two families of colouring numbers where introduced by Kierstead
and Yang in~\cite{kierstead03}, and the admissibility numbers go back to
Kierstead and Trotter in~\cite{kierstead93} and were generalised by Dvo{\v{r}}{\'a}k
in~\cite{dvovrak13}. All these numbers generalise the
\emph{degeneracy}, a.k.a. \emph{colouring number}, which
is defined to be the minimum~$d$ such that there is a linear order
of the vertices of $G$ in which every vertex has at most~$d$ smaller
neighbours. The name ``colouring number'' comes from the fact that
graphs of degeneracy $d$ have a proper $d+1$ colouring which can be
computed efficiently by a simple greedy algorithm. For the generalised
$r$-colouring numbers, instead of smaller neighbours of a vertex we
count smaller vertices reachable by certain paths of length $r$; the
numbers differ by the kind of paths of length $r$ 
considered. We observe that with growing $r$ the colouring numbers
converge to the treewidth of the graph. 

The core results of this paper are various upper and lower bounds for these
families of colouring numbers.  In particular, we prove tight bounds
for $\wcol_r(G)$ for graphs~$G$ of bounded
treewidth. We clarify and tighten the relation between the density
of shallow minors and the
various generalised colouring numbers. 
These upper bounds are complemented by new, stronger
exponential lower bounds on the strong and weak colouring numbers. The lower
bounds can already be achieved on graph classes of bounded degree. 
We furthermore show that there exist classes of polynomial expansion on
which the weak colouring numbers grow super-polynomially in $r$. 
This result answers negatively a question of Joret and Wood, 
whether graph classes of polynomial
expansion have polynomial weak colouring numbers. 
Finally, we show that  computing weak $r$-colouring numbers
is NP-complete for all $r\geq 3$.

\smallskip
After giving some graph theoretic background in
Section~\ref{sec:prelim}, we prove our various bounds on the
generalised colouring numbers in
Sections~\ref{sec:colourings}--\ref{sec:reghighgirth}.
Section~\ref{sec:covers} is devoted to sparse neighbourhood covers,
and the NP-completeness result for the weak colouring numbers is
proved in Section~\ref{sec:NPcomplete}. 

\section{Generalised Colouring Numbers}
\label{sec:prelim}

Our notation from graph theory is standard, we refer the reader to \cite{Diestel} for background.  All graphs in this paper are finite and simple, \ie they do not have self-loops or multiple edges. A class of graphs is \emph{monotone} if it is closed under subgraphs. The \emph{radius}~$\rad(G)$ of~$G$ is $\min_{u\in V(G)}\max_{v\in V(G)}\dist^G(u,v)$.  By~$N_r^G(v)$ we denote the \emph{$r$-neighbourhood} of~$v$ in~$G$, \ie the set of vertices of distance at most~$r$ from~$v$ in~$G$. 


We represent a linear order on $V(G)$ as an injective function $L: V(G) \rightarrow \N$ and write~$\Pi(G)$ for the set of all linear orders on $V(G)$.

Vertex $u$ is \emph{weakly $r$-reachable} from $v$ with respect to the
order $L$, if there is a path $P$ of length $0\leq \ell\leq r$ from
$v$ to $u$ such that $L(u) \le L(w)$ for all $w\in V(P)$.  Let
$\Wreach_r[G,L, v]$ be the set of vertices that are weakly
$r$-reachable from $v$ with respect to~$L$.
If furthermore, all inner vertices $w$ of $P$ satisfy $L(v) < L(w)$, then $u$ is
called \emph{strongly $r$-reachable} from $v$. Let $\Sreach_r[G, L,
v]$ be the set of vertices that are strongly $r$-reachable from~$v$
with respect to $L$.

The \emph{$r$-admissibility} $\adm_r[G,L, v]$ of $v$ with respect to
$L$ is the maximum size~$k$ of a family $\{P_1,\ldots,P_k\}$ of paths
of length at most $r$ in $G$ that start in~$v$, end at a vertex $w$ with
$L(w) \le L(v)$ and satisfy $V(P_i)\cap V(P_j)=\{v\}$ for
$1\leq i\neq j\leq k$. As we can always let the paths end in the first
vertex smaller than~$v$, we can assume that the internal vertices of
the paths are larger than $v$. Note that $\adm_r[G,L,v]$ is an
integer, whereas $\Wreach_r[G,L, v]$ and $\Sreach_r[G,L,v]$ are sets
of vertices.

The \emph{weak
  $r$-colouring number} $\wcol_r(G)$, the \emph{$r$-colouring number}
$\mathrm{col}_r(G)$, and the \emph{$r$-admissibility}
$\mathrm{adm}_r(G)$ are defined as \begin{align*}
  \wcol_r(G)&=\min_{L\in\Pi(G)}\max_{v\in
    V(G)}|\Wreach_r[G,L, v]|,\\
  \mathrm{col}_r(G)&=\min_{L\in\Pi(G)}\max_{v\in
    V(G)}|\Sreach_r[G,L, v]|,\\
  \mathrm{adm}_r(G)&=\min_{L\in\Pi(G)}\max_{v\in
    V(G)}\adm_r[G,L,v].
\end{align*}

It follows from the definitions that, for all $r\in\N$,
$\adm_r(G)\leq \col_r(G)\leq \wcol_r(G)$. Also,
$\adm_1(G)\leq \adm_2(G)\leq \ldots\leq \adm_n(G)$,
$\wcol_1(G)\leq \wcol_2(G)\leq \ldots \leq \wcol_n(G)=\td(G)$ (where $\td(G)$ is the treedepth of~$G$, see
e.g.~\cite{NesetrilOdM12}) and
$\col_1(G)\leq \col_2(G)\leq \ldots\leq \col_n(G)=\tw(G)+1$ (where $\tw(G)$ is the treewidth of~$G$).

To see that $\col_n(G) = \tw(G)+1$, note that treewidth can be
characterised by elimination orders. An \emph{elimination order} of a
graph $G$ is a linear order $L$ on $V(G)$ with which we associate a
sequence of graphs $G_i$. Let $V(G) = \{1,\ldots,n\}$ and $L(i)<L(j)$
for $i<j$, then $G_0 = G$ and for $0<i\le n$,
$V(G_i) = V(G_{i-1})\setminus \{i\}$ and
\[E(G_i) = \Bigr(E(G_{i-1})\setminus \bigl\{\{i,j\} \mid j\le
n\bigr\}\Bigl) \cup \bigl\{\{\ell,j\} \mid \ell\neq j, \{\ell,i\},\{i,j\}\in
E(G_{i-1})\bigr\},\]
\ie we eliminate vertex $i$ and make a clique out of the neighbours of
$i$ in $G_{i-1}$. The width of the elimination order is the
maximum size of a clique over all~$G_i$ minus one. The elimination width of $G$
is the minimum width over all possible widths of elimination orders of
$G$. It is well known that the treewidth of $G$ is equal to its
elimination width. Let $L'$ be the reverse to~$L$. An easy induction
shows that the neighbours of a vertex $i$ in $G_{i-1}$ are exactly
those of $\Sreach_n[G,L',i]\setminus\{i\}$. It follows that $\col_n(G) = \tw(G)+1$.

Furthermore, it was
shown that the generalised colouring numbers are strongly related, \ie $\col_r(G)\leq (\adm_r(G)-1)\cdot (\adm_r(G)-2)^{r-1}+1$
and $\wcol_r(G)\leq \adm_r(G)^r$ (see for example~\cite{dvovrak13}, but note that in that work, paths of length $0$ are not considered for the $r$-admissibility).

\medskip

\section{Admissibility and Density of Shallow Minors}
\label{sec:colourings}

A graph $H$ with vertex set $V(H)=\{v_1,\ldots, v_n\}$ is a
\emph{minor} of a graph $G$, written $H\minor G$, if there are
pairwise vertex disjoint connected subgraphs $H_1,\ldots, H_n$ of~$G$
such that whenever $v_iv_j\in E(H)$, then there are $u_i\in V(H_i)$
and $u_j\in (H_j)$ with $u_iu_j\in E(G)$. We call
$(H_1,\ldots, H_n)$ a {\em{minor model}} of~$H$ in~$G$. For 
$r\in\N$, the graph $H$
is a {\em{depth-$r$ minor}} of $G$, denoted $H\minor_rG$, if there is
a minor model $(H_1,\ldots,H_n)$ of~$H$ in $G$ such that each $H_i$
has radius at most~$r$.

For $r\in\N$, an \emph{$r$-subdivision} of a graph $H$ is obtained
from $H$ by replacing edges by pairwise internally disjoint paths of
length at most $r+1$. If a graph $G$ contains a $2r$-subdivision of~$H$ 
as a subgraph, then $H$ is a \emph{topological depth-$r$ minor}
of~$G$, written $H\minor_r^t G$. Recall that $H$
is a \emph{topological minor} of $G$ (we write $H\minor^t G$) if some
subdivision of $H$ is a subgraph of $G$, that is, if $H\minor_r^t G$
for some $r\in\mathbb N$.

The \emph{edge density} of a graph $G$ is
$\epsilon(G)=|E(G)|/|V(G)|$. Note that the average degree of~$G$ is
$2\epsilon(G)$. A graph is \emph{$k$-degenerate} if every subgraph has
a vertex of degree at most~$k$.  The maximum of the edge densities of
all $H\minor_{r}G$ is known as the \emph{greatest reduced average density}~$\nabla_r(G)$  of $G$ with rank~$r$. Similarly, the maximum of the edge densities of 
all $H\minor_{r}^tG$ is known as the \emph{topological 
greatest reduced average density} 
$\widetilde\nabla_r(G)$  of $G$ with rank~$r$. We will also refer to the functions 
$r \mapsto \nabla_r(G)$ and $r \mapsto \widetilde\nabla_r(G)$ as \emph{expansion} and \emph{topological expansion} of~$G$, respectively.
As proved in~\cite{dvovrak2007asymptotical}, these measures 
satisfy $\widetilde\nabla_r(G)\leq \nabla_r(G)\leq 4\big(4\widetilde\nabla_r(G)\big)^{(r+1)^2}$. 
  
\medskip
A class $\CCC$ of graphs is \emph{nowhere dense} if for all
$\epsilon>0$ and all $r\in\N$ there is an $n_0\in \N$ such that all
$n$-vertex graphs $G\in\CCC$ with at least $n_0$ vertices satisfy
$\nabla_r(G)\leq n^\epsilon$. $\CCC$ is said to have
\emph{bounded expansion} if for every $r$ there is a $c(r)$ such that
$\nabla_r(G)\le c(r)$ for all $G\in\CCC$. It is easy to see
that all classes of bounded expansion are nowhere dense; the converse
does not hold. We say that $\CCC$ has \emph{polynomial expansion}
if there is a polynomial $p(x)$ such that $\nabla_r(G)\le p(r)$ for all
$r\in\N$ and $G\in\CCC$. 


\medskip
The following theorem implies improvements of previous results from
Kierstead and Yang~\cite{kierstead03} and Zhu \cite{zhu09} to the
exponent of their upper bounds for colouring numbers and the weak
colouring numbers.

\begin{theorem}\label{thm:adm}
  Let $G$ be a graph and let $r\geq 1$. Then
$\adm_r(G)\leq 6r\bigl(\widetilde\nabla_{r-1}(G)\bigr)^3$.
\end{theorem}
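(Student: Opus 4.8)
The plan is to argue by contradiction. Write $d=\widetilde\nabla_{r-1}(G)$ and suppose $\adm_r(G)>6rd^3$; the goal is to produce a topological depth-$(r-1)$ minor of $G$ with more than $d$ edges per vertex, which is impossible by the definition of $\widetilde\nabla_{r-1}$ (note that $\widetilde\nabla_{r-1}$ is monotone under subgraphs, since a $2(r-1)$-subdivision inside a subgraph of $G$ is one inside $G$). The first step is to have the hypothesis witnessed by a single, highly structured subgraph. The $r$-admissibility has an elimination-order description: to certify $\adm_r(G)\le k$ it suffices to be able to repeatedly delete from $G$ a vertex $v$ that, in the current graph $H$, is joined to the already-deleted set $X$ by at most $k$ paths of length at most $r$ that are pairwise disjoint except in $v$, have pairwise distinct endpoints in $X$, and have interiors in $H$; for the resulting order $L$ one checks, using the re-routing observation from the definition of $\adm_r$ (end each path at its first vertex below $v$, which preserves disjointness and distinctness of endpoints), that $\adm_r[G,L,v]$ equals precisely the number of such paths available at the moment $v$ was deleted. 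I would run this greedy elimination with threshold $k=6rd^3$: if it never gets stuck then $\adm_r(G)\le 6rd^3$ and we are done, and otherwise it gets stuck at an induced subgraph $H$, with $X=V(G)\setminus V(H)$, such that \emph{every} vertex of $H$ has more than $6rd^3$ such paths into $X$. It is worth stressing that these witnessing paths run from $v$ through the interior of $H$ and only their endpoint leaves $H$; they are not paths inside $H$, which is exactly why $\adm_r$ can be much larger than the degeneracy and why the argument cannot stop here.

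The heart of the proof is then to squeeze a dense depth-$(r-1)$ topological minor out of this ``connected-everywhere-into-$X$'' subgraph. I would build a minor model incrementally, maintaining a set $B$ of branch vertices together with the set $U$ of all vertices used so far on the paths realising its subdivided edges, and at each step picking an unprocessed $v\in V(H)$ and exploiting its more than $6rd^3$ paths into $X$. The crucial elementary observation is that, by disjointness at $v$, each vertex of $U$ lies on at most one of these paths, so at most $|U|$ of them are spoilt and more than $6rd^3-|U|$ survive with interior disjoint from $U$; these surviving paths are used either to attach $v$ to previously chosen branch vertices (through a common endpoint in $X$ this yields a subdivided edge of length at most $2r-1$, hence depth $r-1$) or to introduce new branch vertices. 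A counting argument then shows the construction can be run long enough to obtain a minor with more than $d$ edges per branch vertex: in the intended bookkeeping one produces of order $d^2$ branch vertices, each wired by of order $d$ paths of length at most $r$, so $|U|$ stays below roughly $rd^3<6rd^3$ and enough paths always survive; the three powers of $d$ and the factor $6r$ are exactly the slack this budgeting needs. The realised object is a topological depth-$(r-1)$ minor of $G$ of edge density exceeding $d$, the desired contradiction.

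The main obstacle is precisely this extraction: turning the purely local ``many disjoint short paths from every vertex of $H$ into $X$'' condition into one globally consistent dense shallow-minor model while keeping $|U|$ under control. The delicate case is when most of a chosen vertex's surviving paths reach fresh vertices rather than old branch vertices, forcing one to spawn new branch vertices in a way that still makes the number of edges grow faster than the number of branch vertices; ensuring the extraction costs only a cubic factor in $d$, with no dependence of the exponent on $r$, is where the real work lies. A secondary point requiring care is the elimination-order characterisation of $\adm_r$ itself, in particular the ``interiors in $H$, endpoints in $X$'' bookkeeping, since it governs which vertices are eligible to serve as branch vertices.
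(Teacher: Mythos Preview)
Your overall strategy—assume $\adm_r(G)>6rd^3$ with $d=\widetilde\nabla_{r-1}(G)$, locate via a greedy elimination a configuration witnessing this, and from it build a topological depth-$(r-1)$ minor of density exceeding $d$—is right and matches the paper. But the extraction step, which you yourself flag as ``where the real work lies'', is not a proof, and the mechanism you sketch does not go through.

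Two concrete problems. First, your greedy peels from the wrong end: deleting small vertices first yields, when stuck, a set $H$ in which every $v\in H$ has many short paths \emph{with interior in $H$} ending in $X$. Your prospective subdivision paths thus live in the same set as your prospective branch vertices, causing exactly the conflict you anticipate. The paper's lemma (due to Dvo\v{r}\'ak) peels from the \emph{top} of the order and obtains a set $S$ in which every $v\in S$ has many length-$\le r$ paths \emph{with interior in $V(G)\setminus S$} ending back in $S$; now $S$ can serve as the branch set and the subdivision region is cleanly separated from it. Second, your attachment rule (``through a common endpoint in $X$'') requires two independently chosen path families to share endpoints in $X$, which nothing guarantees; and there is an off-by-one, since two length-$\le r$ paths joined at a common end have length $\le 2r$, giving a depth-$r$ rather than depth-$(r-1)$ minor.

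The paper's extraction is quite different and uses no incremental budget. Starting from $S$, it first takes a \emph{maximal} family $\mathcal P$ of internally disjoint $S$--$S$ paths of length $\le 2r-1$ through $V(G)\setminus S$; this is itself a depth-$(r-1)$ minor on $S$, so $|\mathcal P|\le d\,|S|$ and its set $M$ of internal vertices has size at most $2(r-1)d\,|S|$. Every subgraph of this minor has density $\le d$, hence it is $2d$-degenerate and contains an independent set $R\subseteq S$ of size at least $\lfloor |S|/(2d+1)\rfloor$. For each $v\in R$ one then takes the initial segments of the $>6rd^3$ witnessing paths from $v$ up to their first vertex in $(M\cup S)\setminus\{v\}$; by maximality of $\mathcal P$ and independence of $R$, these segments are internally disjoint across distinct $v\in R$. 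Together with $\mathcal P$ this gives a depth-$(r-1)$ minor on $S\cup M$ with at most $(2r-1)d\,|S|$ vertices and more than $|R|\cdot 6rd^3$ edges, hence density exceeding $d$. The independent set, not a running budget on $|U|$, is what resolves your ``delicate case''.
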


Every class $\CCC$ that excludes a topological minor has $\widetilde\nabla_{r}(G)$ 
bounded by a universal constant for every $G\in\CCC$. This includes familiar classes such as classes of bounded
degree, bounded genus, and bounded treewidth. We obtain the following
corollary.

\begin{corollary}
  Let $\CCC$ be a graph class that excludes some fixed graph as a
  topological minor. Then for all $G\in\CCC$ we have $\adm_r(G)\in\Oof(r)$ and 
  $\wcol_r(G)\in\Oof((c_{\CCC}\cdot r)^r)$, where $c_{\CCC}$ is a constant depending only on the class $\CCC$.
\end{corollary}

For the proof of Theorem~\ref{thm:adm} we need a lemma which is a
variation of a result of Dvo{\v{r}}{\'a}k~\cite{dvovrak13}. For a set
$S\subseteq V(G)$ and $v\in S$, let $b_r(S,v)$ be the maximum number
$k$ of paths $P_1,\ldots, P_k$ of length at most $r$ from~$v$ to $S$
with internal vertices in $V(G)\setminus S$ and with
$V(P_i)\cap V(P_j)=\{v\}$ for $1\leq i\neq j\leq k$.

\begin{lemma}[\cite{dvovrak13}]\label{lem:adm}
  For all graphs $G$ and $r\in\mathbb N$, there exists a set
  $S\subseteq V(G)$ such that $b_r(S,v) \geq \adm_r(G)$ for all $v\in S$.
\end{lemma}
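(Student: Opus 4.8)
The plan is to find $S$ by an iterative "peeling" argument. Start with $S_0 = V(G)$; note that for every $v \in S_0$ we trivially have $b_r(S_0, v) \ge \adm_r(G)$, because $\adm_r(G) = \min_{L} \max_v \adm_r[G,L,v]$ and for any order $L$ the paths witnessing $\adm_r[G,L,v]$ are in particular paths from $v$ back to $S_0 = V(G)$ (the endpoint condition $L(w) \le L(v)$ is vacuous when we only require the endpoint to lie in $S_0$). Actually the cleaner invariant to maintain is: $b_r(S_i, v) \ge \adm_r(G)$ for all $v \in S_i$. Given such an $S_i$, if already $b_r(S_i, v) = \adm_r(G)$ for all $v \in S_i$ we are done; otherwise pick some $v$ with $b_r(S_i, v) > \adm_r(G)$ and set $S_{i+1} = S_i \setminus \{v\}$. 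Since $G$ is finite this terminates with a nonempty set (we must check nonemptiness), and there the equality $b_r(S,v) = \adm_r(G)$ holds for all remaining vertices.

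The two things that need genuine verification are: (i) that removing a single over-admissible vertex $v$ does not push any surviving vertex $u$ below $\adm_r(G)$; and (ii) that the process cannot empty $S$ entirely. For (i), suppose $u \in S_{i+1}$; we have $\adm_r(G)+1 \le b_r(S_i,u)$ witnessed by internally-disjoint paths $Q_1,\dots,Q_k$ from $u$ to $S_i$ with interiors outside $S_i$. At most one of these paths uses $v$ as an internal vertex (wait — $v \in S_i$, so $v$ cannot be an internal vertex of any $Q_j$, since interiors avoid $S_i$); the only role $v$ can play is as the endpoint of at most one $Q_j$. Discarding that one path leaves $\ge \adm_r(G)$ paths all of whose endpoints lie in $S_i \setminus \{v\} = S_{i+1}$ and whose interiors lie outside $S_i \subseteq S_{i+1}^c$, hence $b_r(S_{i+1}, u) \ge \adm_r(G)$. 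So the invariant is preserved, and this is the main (though not hard) point.

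For (ii), the cleanest route is to realize $S$ as the vertex set surviving an elimination-style order. Concretely: build the order $L$ greedily from the \emph{top} down — repeatedly, as long as the current vertex set $S$ contains a vertex $v$ with $b_r(S,v) > \adm_r(G)$, remove such a $v$ and make it the current largest element of $L$. When this stops, either $S = \varnothing$ or $S$ has the desired property. If $S$ were empty, then the order $L$ we built is a full linear order on $V(G)$, and along this order every vertex $v$, at the moment of its removal, had $b_r(S_v, v) > \adm_r(G)$ where $S_v$ is the set present at that time, which is exactly $\{u : L(u) \ge L(v)\}$; but $b_r(S_v,v)$ counts internally-disjoint paths from $v$ to vertices $u$ with $L(u) \ge L(v)$ through vertices with $L > L(v)$ — taking each such path up to its first vertex with $L \le L(v)$, i.e.\ its endpoint, this is precisely $\adm_r[G,L,v]$. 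Hence $\adm_r[G,L,v] > \adm_r(G)$ for every $v$, so $\max_v \adm_r[G,L,v] > \adm_r(G)$, contradicting the definition of $\adm_r(G)$ as the minimum over all orders. Therefore $S \ne \varnothing$, completing the proof.

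The step I expect to be the main obstacle is getting the bookkeeping in (i) exactly right — in particular being careful that "internal vertices outside $S$" automatically prevents a deleted vertex of $S$ from appearing in path interiors, so that each deleted vertex can only cost one path. Once that observation is in place, the argument is a routine finite-termination plus the identification $b_r(S_v,v) = \adm_r[G,L,v]$ for the top-down order, which is the same correspondence already used in the excerpt to compute $\col_n(G) = \tw(G)+1$.
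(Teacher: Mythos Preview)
Your peeling argument has a gap right at the start: the invariant $b_r(S_0,v)\ge\adm_r(G)$ fails for $S_0=V(G)$. The definition of $b_r(S,v)$ requires the \emph{internal} vertices of the paths to lie in $V(G)\setminus S$, so for $S=V(G)$ only paths of length at most $1$ count and $b_r(V(G),v)=1+\deg(v)$. Take $K_n$ with a pendant leaf $p$: then $\adm_1(G)=n$ while $b_1(V(G),p)=2$. Your justification (``the paths witnessing $\adm_r[G,L,v]$ are in particular paths from $v$ back to $V(G)$'') overlooks this interior constraint. The preservation step (i) is also off by one: you assert $\adm_r(G)+1\le b_r(S_i,u)$ for a \emph{surviving} vertex $u$, but the invariant only gives $\adm_r(G)\le b_r(S_i,u)$ (it is the removed vertex $v$ that has the strict inequality), so after discarding the one path ending at $v$ you are left with only $b_r(S_{i+1},u)\ge\adm_r(G)-1$. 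With both the base case and the inductive step broken, part (ii) becomes vacuous: termination merely tells you $b_r(S,v)\le\adm_r(G)$ for the survivors, which any singleton set already satisfies.

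The paper runs the greedy construction in the opposite direction. Assuming for contradiction that \emph{every} $S$ contains some $v$ with $b_r(S,v)<\adm_r(G)$, one builds $L$ top-down by repeatedly deleting a vertex of \emph{minimum} $b_r$ from the current set; the hypothesis guarantees this always succeeds and produces a full order with $\adm_r[G,L,v]<\adm_r(G)$ for all $v$, contradicting the definition of $\adm_r(G)$. (This only yields $b_r(S,v)\ge\adm_r(G)$ on the resulting set---which is all that is used in the proof of \cref{thm:adm}---not the equality written in the lemma statement.)
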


\begin{proof}
  Assume that all subsets $S\subseteq V(G)$ contain a vertex $v$ such that
  $b_r(S,v)<\adm_r(G)$. We construct an order
  $L(v_1)< L(v_2)< \ldots < L(v_n)$ of $V(G)$ as follows. If
  $v_{i+1},\ldots, v_n$ have already been ordered, choose $v_{i}$ such
  that if $S_i=\{v_{1},\ldots, v_i\}$, then $b_r(S_i,v_{i})$ is
  minimal. Clearly, the $r$-admissibility of the resulting order is
  one of the values $b_r(S_i,v_i)$ occurring in its construction. This
  implies $\adm_r(G) < \adm_r(G)$, a contradiction.
\end{proof}

\begin{proof}[Proof of Theorem~\ref{thm:adm}]
  Let $G$ be a graph with $\widetilde\nabla_{r-1}(G)\le c$, and let
$\ell\coloneqq6rc^3+1$. Suppose for contradiction that
  $\adm_r(G)>\ell$. By Lemma~\ref{lem:adm}, there exists a set $S$
  such that $b_r(S,v)>\ell$ for all $v\in S$. For $v\in S$, let
  $\mathcal{P}_v$ be a set of paths from $v$ to $S$ witnessing this, and
  let $s\coloneqq|S|$.

  Choose a maximal set $\mathcal{P}$ of pairwise internally
  vertex-disjoint paths of length at most $2r-1$ connecting pairs of
  vertices from $S$ whose internal vertices belong to
  $V(G)\setminus S$ such that each pair of vertices is connected by at
  most one path. Let $H$ be the graph with vertex set $S$ and edges
  between all vertices $v,w\in S$ connected by a path in $\mathcal
  P$.
  Then $H\minor_{r-1}^tG$ and hence
  $|\mathcal{P}|=|E(H)|\leq s\cdot c$.  Let~$M$ be the set of all
  internal vertices of the paths in $\mathcal{P}$, and let
  $m\coloneqq|M|$. Then $m\leq s\cdot c\cdot (2r-2)$.

  Note that we not only have $H\minor_{r-1}^tG$, but also
  $H'\minor_{r-1}^tG$ for all $H'\subseteq H$. Thus for all 
  $H'\subseteq H$ we have
  $\epsilon(H')\le c$, and therefore $H'$ has a vertex of degree at
  most $2c$. In other words, $H$ is $2c$-degenerate. This implies
  that $H$ is $(2c+1)$-colourable and hence contains an independent
  set $R$ of size at least $\left\lceil s/(2c+1)\right\rceil$.
  

  For $v\in S$, let $\mathcal Q_v$ be the set of initial
  segments of paths in $\mathcal{P}_v$ from $v$ to a vertex in
  $(M\cup S)\setminus\{v\}$ with all internal vertices in
  $V(G)\setminus(M\cup S)$. Observe that for $u,v\in R$ the paths in
  $\mathcal Q_v$ and $\mathcal Q_u$ are internally disjoint, because
  if $Q\in\mathcal Q_u$ and $Q'\in\mathcal Q_v$ had an internal vertex
  in common, then $Q\cup Q'$ would contain a path of length at most
  $2r-2$ that is internally disjoint from all paths in $\mathcal P$,
  contradicting maximality of $\mathcal P$.

  Let $G'$ be the union of all paths in $\mathcal{P}$ and all paths in
  $\mathcal Q_v$ for $v\in R$, and let~$H'$ be obtained from $G'$ by
  contracting all paths in $\bigcup_{v\in R}\mathcal Q_v$ to single
  edges. Then $H'\preceq^t_{r-1}G$.

  We have
  $|V(H')|\leq s+m\leq s+s\cdot c\cdot(2r-2)\leq s\cdot c\cdot (2r-1)$
  and at least
  $|E(H')|\geq\left\lceil s/(2c+1)\right\rceil\cdot \ell$
  edges.  Thus $\epsilon(H')\ge \ell/6rc^2>c$. A contradiction.
\end{proof}

\section{The Weak Colouring Numbers of Graphs of Bounded Treewidth}\label{sec:boundedtreewidth}
 
A \emph{tree decomposition} of a graph $G$ is a pair $(T,X)$, where
$T$ is a tree, $X=(X_t:t\in V(T))$, is a family of subsets of V(G)
(called bags) such that
\smallskip
\begin{enumerate}
\item[(i)] $\bigcup_{t\in V(T)}X_t=V(G)$,
\item[(ii)] for every edge 
$\{u,v\}$ of $G$ there exists $t \in V(T)$ with $u,v \in X_t$ and
\item[(iii)] if
$r,s,t \in V(T)$ and $s$ lies on the unique path of $T$ between $r$ and $t$,
then $X_r\cap X_t\subseteq X_s$.
\end{enumerate}  
 
 \smallskip
A graph has \emph{treewidth} at most $k$ if it admits a
tree decomposition $(T,X)$ such that $|X_t|\leq k+1$ for each $t\in
V(T)$ and we write $\tw(G)$ for the treewidth of $G$.  We assume
familiarity with the basic theory of tree decompositions as in~\cite{Diestel}.

It is well known that a graph of treewidth~$k$ has a
tree decomposition $(T,X)$ of width~$k$ such that for every
$\{s,t\}\in E(T)$ we have $|X_s\setminus X_t| \le 1$. We call such
decompositions \emph{smooth}. The following separation property of
tree decompositions is well known.

\begin{lemma}\label{lem:sep}
  If $r,s,t\in V(T)$, $u\in X_r$ and $v\in X_t$ and $s$ is on the
 path of $T$ between $r$ and~$t$, then every path from $u$ to
  $v$ in $G$ uses a vertex contained in~$X_s$.
\end{lemma}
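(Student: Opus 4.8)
The plan is to prove the separation lemma by a direct appeal to the defining axioms of a tree decomposition, specifically the connectivity/consistency axiom (iii), exploiting the fact that removing a vertex from a tree disconnects it in a controlled way. The statement is the classical ``bags are separators'' property, so the proof is short but worth spelling out.

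\begin{proof}
  Let $Q$ be any path from $u$ to $v$ in $G$, say $Q = w_0 w_1 \cdots w_k$ with $w_0 = u$ and $w_k = v$. Deleting the vertex $s$ from the tree $T$ splits $T$ into connected components; since $s$ lies on the unique $T$-path between $r$ and $t$, the vertices $r$ and $t$ lie in different components (or one of them equals $s$, in which case the claim is immediate as $u\in X_r=X_s$ or $v\in X_t=X_s$). Let $T_r$ be the component (together with $s$ itself) containing $r$, and let $T_t=V(T)\setminus V(T_r)$; then $s\in V(T_r)$ and $t\in T_t$, and moreover $s$ lies on every $T$-path from a node of $T_r\setminus\{s\}$ to a node of $T_t$.

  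For each vertex $w$ on $Q$ fix a node $b(w)\in V(T)$ with $w\in X_{b(w)}$, chosen so that $b(w_0)=r$ and $b(w_k)=t$ (possible since $u\in X_r$ and $v\in X_t$). Then $b(w_0)=r\in V(T_r)$ while $b(w_k)=t\notin V(T_r)$, so along the sequence $b(w_0),b(w_1),\dots,b(w_k)$ there is a first index $j$ with $b(w_{j-1})\in V(T_r)$ and $b(w_j)\notin V(T_r)$. Since $\{w_{j-1},w_j\}\in E(G)$, axiom (ii) gives a node $t'\in V(T)$ with $w_{j-1},w_j\in X_{t'}$. If $t'\in V(T_r)$, then $w_j\in X_{t'}$ with $t'\in V(T_r)$ and $w_j\in X_{b(w_j)}$ with $b(w_j)\in T_t$; the node $s$ lies on the $T$-path between $t'$ and $b(w_j)$ (by the choice of the split at $s$), so axiom (iii) yields $w_j\in X_{t'}\cap X_{b(w_j)}\subseteq X_s$, and $w_j$ is a vertex of $Q$ in $X_s$. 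Symmetrically, if $t'\notin V(T_r)$, then $w_{j-1}\in X_{t'}\cap X_{b(w_{j-1})}$ with $s$ on the $T$-path between $b(w_{j-1})\in V(T_r)$ and $t'\in T_t$, so again $w_{j-1}\in X_s$. In either case $Q$ uses a vertex of $X_s$.
\qed\end{proof}

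**The main obstacle** is purely bookkeeping: being careful that the node $s$ genuinely separates the relevant pairs of nodes in $T$ after one picks witnessing bags for consecutive vertices of the path, and handling the degenerate cases where $s$ coincides with $r$ or $t$. There is no real depth here; the only thing to get right is that axiom (iii) applies to $w_{j-1}$ or $w_j$ because $s$ sits on the $T$-path between the two bags we have exhibited containing it.
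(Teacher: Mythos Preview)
Your proof is correct and is precisely the standard argument for the ``bags are separators'' property. Note that the paper does not actually supply a proof of this lemma: it merely states the result as well known (``The following separation property of tree decompositions is well known'') and proceeds to use it. So there is no approach in the paper to compare against; your write-up simply fills in a detail the authors chose to omit.
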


For a tree decomposition $(T,X)$ of $G$ and a node $s\in
V(T)$ we define a partial order $L^{T,s}$ on $V(T)$ demanding that $L^{T,s}(t)\le
L^{T,s}(u)$ if $t$ lies on the path from $s$ to~$u$ (\ie $L^{T,s}$ is
the standard tree order where $s$ is minimum). 

\begin{theorem}\label{uwcoltw}
Let $\tw(G) \leq k$. Then $\wcol_r(G) \leq {r+k \choose k}$.
\end{theorem}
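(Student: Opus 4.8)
The plan is to build a linear order $L$ on $V(G)$ from a smooth tree decomposition $(T,X)$ of width $k$, rooted at an arbitrary node $s$, and then bound $|\Wreach_r[G,L,v]|$ for every $v$ by a counting argument involving at most $r$ ``levels'' and at most $k$ bag-coordinates. First I would fix a smooth decomposition, so that along every tree edge $\{t,t'\}$ (with $t'$ the parent) we have $|X_t \setminus X_{t'}| \le 1$; thus, except for the root bag, each node $t$ has a unique \emph{own vertex} $v_t \in X_t$ introduced at $t$. This gives a bijection between $V(G)$ and a subset of $V(T)$, and I would order $V(G)$ by a depth-first / BFS-type traversal that respects the tree order $L^{T,s}$: a vertex introduced closer to the root gets a smaller $L$-value, and vertices introduced in the same bag or at incomparable nodes are ordered arbitrarily but consistently with $L^{T,s}$. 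The key structural fact, coming from \Cref{lem:sep} and property (iii) of tree decompositions, is that if $u$ is weakly $r$-reachable from $v$, witnessed by a path $P$ of length $\ell \le r$ all of whose vertices are $\ge_L v$, then the node $t_u$ introducing $u$ is an \emph{ancestor} of the node $t_v$ introducing $v$ in $T$: any vertex of $P$ lives in a bag, the path must pass through every separator bag between $t_u$ and $t_v$, and minimality of $u$ along $P$ forces $t_u$ to lie on the root-to-$t_v$ path.

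Next I would turn this into the binomial bound. Fix $v$ and let $t_v = t$ be its node. Every weakly $r$-reachable $u$ sits in some bag $X_{t'}$ with $t'$ an ancestor of $t$ (including $t$ itself). I want to show that the number of such $u$ is at most $\binom{r+k}{k}$. The refined claim is: if $u$ is weakly $r$-reachable from $v$ via a path of length $\ell$, then $u$ is introduced at an ancestor node $t'$ whose \emph{tree distance} from $t$ is at most $\ell \le r$, AND $u$ lies in $X_t$ or is one of the at most $k$ vertices of the bag at that ancestor that are ``still alive'' when we descend to $t$. More precisely I would argue that $\Wreach_r[G,L,v]$ injects into the set of pairs (number $j$ of steps up the tree, which of the $\le k$ non-own bag-slots the vertex occupies as you look up $j$ levels), and a standard stars-and-bars count of the ways to distribute ``at most $r$ up-steps among at most $k$ coordinate-slots'' yields exactly $\binom{r+k}{k}$. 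Concretely: consider the path from $t$ up to the root; each strict ancestor $t'$ of $t$ contributes at most one own vertex $v_{t'}$, and $v_{t'}$ is weakly $r$-reachable only if $v_{t'} \in X_{t''}$ for all $t''$ on the $t$–$t'$ path (so it never "leaves" the bag) and the number of such $t''$ is $\le r$; since at every node at most $k$ vertices of the current bag are not the own vertex, the vertices of $X_t$ together with these "persisting" ancestor vertices are naturally indexed by a multiset argument giving $\binom{r+k}{k}$.

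The main obstacle will be making that counting bijection airtight — specifically, showing that a weakly $r$-reachable vertex introduced $j$ levels above $t$ must in fact have persisted in the bag through all $j$ intermediate separators, and organizing the double bound (at most $r$ levels up, at most $k$ slots) so that it collapses to $\binom{r+k}{k}$ rather than something like $(r+1)^k$ or $k^{r}$. The clean way is probably an induction on $r$: writing $W_r(t)$ for an upper bound on $|\Wreach_r[G,L,v_t]|$ in terms of the $\le k$ bag-slots at $t$ and the structure above, one gets a Pascal-type recurrence $W_r \le W_{r-1}(\text{one step up}) + W_r(\text{same bag, fewer free slots})$, i.e. $\binom{r+k}{k} = \binom{r-1+k}{k} + \binom{r+k-1}{k-1}$, matching the recursion exactly. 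The base cases $r=0$ (giving $\binom{k}{k}=1$, just $v$ itself) and $k=0$ (a forest, $\binom{r}{0}=1$) are immediate, and I expect the inductive step to fall out once the tree-ancestor and bag-persistence lemmas above are stated precisely.
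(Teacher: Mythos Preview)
Your final paragraph lands on essentially the paper's strategy---a Pascal-type recursion $w(k,r)=w(k,r-1)+w(k-1,r)$ yielding $\binom{r+k}{k}$---but the counting argument you spend most of the proposal on is based on claims that are false, and the recursion itself is missing the mechanism that makes it work. The assertion that a weakly $r$-reachable vertex $v_{t'}$ must either have tree-distance at most $r$ from $t_v$ or must persist in every bag on the $t'$--$t_v$ path is wrong in both directions. For treewidth~$2$, take a fan with centre $0$ and rim $1,\dots,n$ and bags $\{0,i,i+1\}$: vertex $0$ is introduced at the root and is at graph-distance~$1$ from $v=n$, yet at tree-distance $n-2$. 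For treewidth~$1$, a vertex two tree-levels above $t_v$ has already left the bag but is still weakly $2$-reachable. So there is no stars-and-bars injection over ``tree-levels $\times$ bag-slots'', and the obstacle you flag is not a technicality but a genuine failure of the approach.

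For the recursion, the split is not ``one step up the tree'' versus ``same bag, fewer slots''. The paper first passes to the edge-maximal supergraph of the same treewidth, so every bag is a clique, and then pivots at the \emph{minimum} vertex $u$ of $X_{t_v}$ (which may have been introduced arbitrarily far up the tree). Any weakly $r$-reachable $w$ with $L(w)\le L(u)$ can be rerouted through $u$ in one clique-step, hence lies in $\Wreach_{r-1}$ from $u$ in the subgraph above $t_u$; any $w$ with $L(w)>L(u)$ lies in the subgraph below $t_u$ with $u$ deleted, which has treewidth $\le k-1$. Both halves rely on a ``decreasing path'' lemma (every witnessing path can be shortened to one with strictly decreasing $L$-values). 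Your phrases ``one step up'' and ``fewer free slots'' gesture at the right recursion, but without the clique assumption, the pivot at $u$, and the resulting treewidth drop, the inductive step does not go through.
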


\begin{proof}
  Let $(T,X)$ be a smooth tree decomposition 
  of~$G$ of width at most~$k$.  Since if $G'$ is a
  subgraph of $G$, then $\wcol_r(G')\le\wcol_r(G)$, \WLOG we may assume that
  $G$ is edge maximal of treewidth $k$, \ie each bag induces a
  clique in~$G$. We choose an arbitrary root $s$ of $T$ and let~$L'$
  be some linear extension of $L^{T,s}$.  For every $v\in V(G)$, let $t_v$
  be the unique node of $T$ such that $L'(t_v)=\min\{L'(t)| v\in
  X_t\}$ and define a linear ordering $L \coloneqq L_G^{T,s}$ of $V(G)$ such that: 
  
  \smallskip
  \begin{itemize}
  \item[(i)] $L'(t_v)<L'(t_u) \Rightarrow L(v)<L(u)$, and 
  \item[(ii)] if   $L'(t_v)=L'(t_u)$ (which is possible in the root bag $X_s$), 
  break ties arbitrarily.
  \end{itemize}

\smallskip  
  Fix some $v\in V(G)$ and let $w\in \Wreach_r[G,L,v]$. By
  Lemma~\ref{lem:sep} and the definition of $L$, it is immediate that $t_w$
  lies on the path from $t_v$ to $s$ in $T$.  Let $u \in X_{t_v}$ be
  such that $L(u)\leq L(u')$ for all $u'\in X_{t_v}$.  If $t_v = s$,
  then $|\Wreach_r[G,L,v]| \le k+1$ and we are done.  Otherwise, as
  the decomposition is smooth, $L'(t_u) < L'(t_v)$. We define two
  subgraphs~$G_1$ and~$G_2$ of~$G$ as follows. The graph~$G_1$ is
  induced by the vertices from the bags between~$s$ and $t_u$, \ie by
  the set $\bigcup\{X_t \in V(T) \mid L^{T,s}(t) \le
  L^{T,s}(t_u)\}$. The graph $G_2$ is induced by $\bigcup\{X_t \in\
  V(T) \mid L^{T,s}(t_u) \le L^{T,s}(t) < L^{T,s}(t_v)\}\setminus
  V(G_1)$.

\pagebreak  Let $L_i$ be the restriction of $L$ to $V(G_i)$, for $i=1,2$,
  respectively. We claim that if $w\in \Wreach_r[G,L,v]$, then $w\in
  \Wreach_{r-1}[G_1,L_1,u]\cup \Wreach_r[G_2,L_2,v]$.  To see this,
  let $P=(v=v_1,\ldots, v_\ell=w)$ be a shortest path between $v$ and
  $w$ of length $\ell\le r$ such that $L(w)$ is minimum among all
  vertices of $V(P)$. 
  
  We claim that $L(v_1)>\ldots >L(v_\ell)$ (and call $P$ a decreasing path). This implies in particular that all $t_{v_i}$ lie on the path from $t_v$ to $s$ and that $L^{T,s}(t_{v_1})\geq \ldots \geq L^{T,s}(t_{v_\ell})$ (non-equality may only hold in the last step, if we take a step in the root bag). 

  Assume that the claim does not hold and let $i$ be the first position with $L(v_i)<L(v_{i+1})$. It suffices to show that we can find a subsequence (which is also a path in~$G$) $Q=v_i,v_j,\ldots, v$ of $P$ with $j>i+1$. 
By definition of $t_{v_{i+1}}=:t$, $X_t$ contains~$v_i$. (Indeed, there is an edge between $v_i$ and $v_{i+1}$, which must be contained in some bag, but $v_{i+1}$ appears first in $X_t$ counting from the root and each bag induces a clique in $G$). Let $t'$ be the parent node of~$t$. $X_{t'}$ also contains~$v_i$, as the decomposition is smooth and $v_{i+1}$ is the unique vertex that joins~$X_t$. 
But by Lemma~\ref{lem:sep}, $X_{t'}$ is a separator that separates $v_{i+1}$ from all vertices smaller than $v_{i+1}$. We hence must visit another vertex $v_j$ from $X_{t'}$ in order to finally reach $v$. We can therefore shorten the path as claimed.

If $L(w) \le L(u)$, then $P$ goes through $X_{t_u}$ by Lemma~\ref{lem:sep}. Let $u'$ be the first vertex of~$P$ that lies in
$X_{t_u}$. We show that there is a shortest path from $v$ to~$u'$ that
uses~$u$ as the second vertex. By assumption, $v\neq u$. If $\{v,u'\}\in E(G)$, then
$\{v,u'\}$ must be contained in some bag~$X_{t'}$. By definition of $t_v$,
$t' = t_v$, as $t_v$ is the first node of $T$ on the path from $s$ to
$t_v$ containing~$v$. By definition of $t_u$ and because $(T,X)$ is
smooth, $u$ is the only vertex from $t_v$ that appears in $t_u$. Thus
$u' = u$, so the shortest path from $v$ to $u'$ uses $u$. If the
distance between $v$ and~$u'$ is at least~$2$, a shortest path can be
chosen as $v,u,u'$. Indeed $u\in X_{t_u} \cap X_{t_v}$ and every bag induces a
clique by assumption.

It follows that if $L(w) \le L(u)$ and $w\in \Wreach_r[G,L,v]$, then there
is a shortest path from $v$ to $w$ that uses $u$ as the
second vertex. Thus $w\in\Wreach_{r-1}[G_1,L_1,u]$, as $P$ is decreasing.

If $L(w) > L(u)$, then $P$ never visits vertices of $G_1$. If $P$
lies completely in~$G_2$, we have $w\in \Wreach_r[G_2,L_2,v]$. If $P$
leaves $G_2$, it visits vertices of $G$ that are contained only in
bags strictly below $t_v$. However, this is impossible, as $P$ is decreasing.

Therefore we have 
$$|\Wreach_r[G,L,v]|\leq |\Wreach_{r-1}[G_1,L_1,u]|+|\Wreach_r[G_2,L_2,v]|. \ \ \ \ \ (\star)$$
Note that the tree\-width of $G_2$ is at most $k-1$, as we removed $u$ from every bag. More
precisely, the tree decomposition $(T^2,X^2)$ of $G_2$ of width at
most $k-1$ is the restriction of $(T,X)$ to $G_2$, \ie we take tree
nodes $t$ contained between $t_u$ and $t_v$ (including $t_v$ and not
including $t_u$) and define $X^2_t = X_t\cap V(G_2)$.

Now, recall the definition of $L_G^{T,s}$ as in the beginning of the proof and let 
$w(r,k)$ be the maximum $|\Wreach_r[H,L_H^{T,s},v]|$, ranging over all graphs $H$ with $\tw(H)\leq k$,
linear orders $L_H^{T,s}$ obtained by an $s\in V(T)$, and vertices $v\in V(H)$. By $(\star)$, we then have
$|\Wreach_r[G,L,v]|\leq w(r,k-1)+w(r-1,k)$. Since $G,L$ and $v$ where arbitrary, it follows that
$$w(k,r) \leq w(k,r-1)+w(k-1,r).$$

Recall that $\wcol_1(G)$ equals the degeneracy
of $G$ plus one and note that every graph of tree\-width  $\leq k$ is
$k$-degenerate, hence $w(k,1)\leq k+1$. Furthermore, it is easy to observe that 
for a tree $T$, we have $\wcol_r(T)\leq r+1$: any linear extension of a tree-order with respect to some root will do. 
Hence $w(1,r)\leq r+1$. Since $\binom{r+k}{k}=\binom{r+k-1}{k}+\binom{r+k-1}{k-1}$, 
we conclude by induction that $w(r,k) \leq \binom{r+k}{k}$.
\end{proof}

The proof of Theorem~\ref{uwcoltw} gives rise to a construction of a
class of graphs that matches the upper bound proven there. We
construct a graph of treewidth $k$ and weak $r$-colouring number
${k+r\choose k}$ whose tree decomposition $(T,X)$ has a highly
branching host tree $T$. This enforces a path in the tree from the root to
a leaf that realises the recursion from the proof of Theorem~\ref{uwcoltw}.

\begin{theorem}\label{thm:Gkr}
  For every $k\geq 1$, $r\geq 1$, there is a family of graphs $G^k_r$ with $\tw(G^k_r) = k$, such
  that $\wcol_{r}(G^k_r)={r+k \choose k}$. In fact, for all $r'\le
  r$, $\wcol_{r'}(G^k_r) = {r'+k \choose k}$.
\end{theorem}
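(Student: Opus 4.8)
The plan is to construct the graphs $G^k_r$ recursively, mirroring the recursion $w(k,r) = w(k,r-1) + w(k-1,r)$ that governs the upper bound in \cref{uwcoltw}. For the base cases: $G^k_1$ is a clique $K_{k+1}$ (treewidth $k$, and $\wcol_1 = k+1 = \binom{1+k}{k}$), and $G^1_r$ is a path (or a long path, treewidth $1$, and $\wcol_r = r+1 = \binom{r+1}{1}$). For $k,r>1$, I would build $G^k_r$ from $G^k_{r-1}$ and $G^{k-1}_r$ as follows: take a ``spine'' vertex $u$ that will be joined to a $(k-1)$-clique, forming a $k$-clique $A$; attach to $A$ a very large number $B$ of pairwise ``independent'' copies of $G^k_{r-1}$, each glued along a $k$-clique so that $u$ plays the role of the globally smallest vertex; and separately attach (through the clique $A\setminus\{u\}$ together with fresh vertices) many copies of $G^{k-1}_r$ so that the step ``from $v$ into $G_1$ via $u$'' of the proof is forced. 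The host tree should be highly branching at each level, with branching factor chosen larger than the total number of vertices of the child graphs, so that in any linear order some branch is ``untouched'' and the adversary cannot do better than on a single clean copy of the child graph.

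The key steps, in order: (1) Define $G^k_r$ precisely by the recursion above, exhibiting in parallel a smooth tree decomposition of width $k$, so that $\tw(G^k_r) \le k$ is immediate; the lower bound $\tw(G^k_r)\ge k$ follows since $G^k_r$ contains $K_{k+1}$ (already $G^k_1$ does, and cliques persist under the construction). (2) Prove the upper bound $\wcol_{r'}(G^k_r) \le \binom{r'+k}{k}$ for all $r'\le r$ — this is not automatic from \cref{uwcoltw} when $r' < r$ because that theorem only gives $\binom{r+k}{k}$, but here one argues directly that the specific decomposition, linearized via the tree order, realizes the claimed bound for every $r'\le r$ simultaneously (the recursion $w(k,r')=w(k,r'-1)+w(k-1,r')$ is monotone and the proof of \cref{uwcoltw} goes through verbatim for each $r'$). (3) Prove the matching lower bound $\wcol_{r'}(G^k_r) \ge \binom{r'+k}{k}$: fix an arbitrary linear order $L$; use the high branching to find, at the top level, a child copy $C$ of $G^k_{r-1}$ none of whose non-root vertices is minimal in its ``local window,'' and simultaneously a child copy $D$ of $G^{k-1}_r$ with the same property; apply induction to find a vertex $v$ in the appropriate child that weakly reaches $\binom{(r'-1)+k}{k}$ vertices inside $C$ with respect to the induced order; then show that from $v$ one additionally weakly reaches $\binom{r'+(k-1)}{k-1}$ further vertices through the clique $A\setminus\{u\}$ into the copy $D$ of $G^{k-1}_r$, and that these two sets are disjoint (they live in vertex-disjoint parts of the graph separated by $A$), giving $\binom{(r'-1)+k}{k} + \binom{r'+(k-1)}{k-1} = \binom{r'+k}{k}$ weakly $r'$-reachable vertices from $v$.

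The main obstacle I anticipate is step (3), and within it two delicate points. First, the branching/pumping argument: I must choose the branching factor at each recursive level large enough that, for \emph{any} order $L$, there is a whole branch of the host tree in which the smallest vertex lies at the root $k$-clique $A$ (equivalently, the copies hanging there are ``order-isolated''), and this has to be done simultaneously for the $G^k_{r-1}$-children and the $G^{k-1}_r$-children and propagated down the recursion; a clean way is to make the branching factor at depth-$(k,r)$ strictly exceed $|V(G^k_{r-1})| + |V(G^{k-1}_r)|$ so that a counting/pigeonhole argument isolates a child whose vertices are all larger than the root clique. Second, the disjointness of the two reachable sets and the ``$+1$ extra step'' bookkeeping: I need the copy of $G^{k-1}_r$ to be attached so that $v$ reaches into it through $u$'s neighbours \emph{without} spending an extra step relative to what the $\binom{r'+(k-1)}{k-1}$ count assumes — this is exactly the role of the vertex $u$ and the clique structure of $A$, and getting the off-by-one right (paths of length $\le r'$ versus $\le r'-1$, and which clique vertex is smallest) is where the construction must be tuned carefully. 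Once the construction is set up to match the inductive proof of \cref{uwcoltw} line by line, the lower bound should follow, but verifying that the adversary's order genuinely cannot avoid the recursion is the crux.
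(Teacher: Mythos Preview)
Your high-level plan---a recursive construction that mirrors the recursion $w(k,r)=w(k,r-1)+w(k-1,r)$, with large branching so that a pigeonhole argument forces an increasing root-to-leaf path in any linear order---is exactly the strategy the paper uses. But the specific construction you sketch is structurally different from the paper's, and this difference breaks the lower-bound argument.

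You propose to take a single $k$-clique $A$ and attach to it, \emph{in parallel}, many copies of $G^k_{r-1}$ and separately many copies of $G^{k-1}_r$. The paper instead \emph{nests} the two ingredients: it takes $G(k,r-1)$ as a backbone (with its tree decomposition) and attaches $c=\binom{r+k}{k}$ copies of $G(k-1,r)$ at every \emph{leaf} of that backbone, adding the leaf vertex to every bag of the attached copy. The increasing path produced by the pigeonhole argument then runs first through the backbone $G(k,r-1)$ and continues into one attached $G(k-1,r)$ copy; the last vertex $v$ on this path sits deep inside that copy. For this $v$, the decomposition of $\Wreach_{r'}[G,L,v]$ into the two parts $G_1$ and $G_2$ from the proof of \cref{uwcoltw} lines up exactly: $G_1$ is (isomorphic to) the backbone $G(k,r-1)$ and $G_2$ is the attached copy $G(k-1,r)$, so the inductive counts $w(k,r'-1)$ and $w(k-1,r')$ are realised simultaneously from the single vertex $v$.

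With your parallel attachment this fails. You want to find, by induction, a vertex $v$ inside some copy $C\cong G^k_{r-1}$ that weakly $(r'-1)$-reaches $\binom{(r'-1)+k}{k}$ vertices in $C$, and then argue that $v$ additionally reaches $\binom{r'+(k-1)}{k-1}$ vertices in a copy $D\cong G^{k-1}_r$ ``through $A\setminus\{u\}$''. But the inductive hypothesis gives you no control over where $v$ sits inside $C$: it could be at distance up to $r-1$ from the clique $A$, and then paths of length $\le r'$ from $v$ cannot penetrate $D$ at all, let alone reach the $\binom{r'+(k-1)}{k-1}$ vertices that some \emph{other} vertex $w\in D$ reaches by induction. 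The difficulty you flag is not an off-by-one; it is that $v$ lives in the wrong place. The fix is precisely the nesting: put $v$ at the end of an increasing path that already threads through the $G^k_{r-1}$ part before entering the $G^{k-1}_r$ part, so that the $G_1$/$G_2$ split of the upper-bound proof applies verbatim to $v$. Relatedly, your base cases ($K_{k+1}$ and a single path) lack the branching needed for the pigeonhole; the paper uses trees of branching degree $c$ already in the base cases $G(k,1)$ and $G(1,r)$.
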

\begin{proof}
Fix $r,k$ and let $c={r+k\choose k}$. We define graphs $G(k',r')$ for all $r'\leq r, k'\leq k$ and corresponding tree decompositions $\mathcal{T}(k',r') = (T(k',r'),X(k',r'))$ of $G(k',r')$ of width~$k'$ with a
  distinguished root $s(T(k',r'))$ by induction on~$k'$ and~$r'$. We will show that $\wcol_{r'}(G(k',r'))\geq {r'+k'\choose k'}$.  We guarantee several invariants for all values of $k'$ and~$r'$ which will give us control over a sufficiently large part of any order that witnesses $\wcol_{r'}(G(k',r'))\geq {r'+k'\choose k'}$.
  \smallskip
  
  \begin{enumerate}
  \item[(i)] There is a bijection $f:V(T(k',r'))\rightarrow V(G(k',r'))$ such that $f(s(T(k',r')))$ is the unique vertex contained in $X_{s(T(k',r'))}$ and if $t$ is a child of $t'$ in $T(k',r')$, then $f(t)$ is the unique vertex of $X_{t}\setminus X_{t'}$. Hence any order defined on $V(T)$ directly translates to an order of $V(G)$ and vice versa. 
  \item[(ii)] In any order $L$ of $V(G(k',r'))$ which satisfies $\wcol_r(G(k',r'))\leq c$, there is some root-leaf path $P=t_1,\ldots, t_m$ such that $L(f(t_1))<\ldots<L(f(t_m))$. 
  \item[(iii)] Every bag of $T(k',r')$ contains at most $k'+1$ vertices. 
\end{enumerate}

\smallskip
It will be convenient to define the tree decompositions first and to
define the corresponding graphs as the unique graphs induced by the
decomposition in the following sense. For a tree $T$ and a family of
finite and non-empty sets $(X_t)_{t\in V(T)}$ such that if $z,s,t\in
V(T)$ and~$s$ is on the path of $T$ between $z$ and~$t$, then $X_z\cap X_t\subseteq X_s$, we define the graph \emph{induced} by $(T,(X_t)_{t\in V(T)})$ as the graph~$G$ with $V(G)=\bigcup_{t\in V(T)}X_t$ and $\{u,v\}\in E(G)$ if and only if $u,v\in X_t$ for some $t\in V(T)$. Then $(T,(X_t)_{t\in V(T)})$ is a tree decomposition of $G$. 

\smallskip

For $k'\geq 1, r'=1$, let $T(k',r')\eqqcolon T$ be a tree of depth
$k'+1$ and branching degree~$c$ with root $s$. Let $L^{T,s}$ be the
natural partial tree order.  Let $f\colon V(T) \to V$ be a bijection
to some new set $V$.
We define $X_{t}\coloneqq
\{f(t) \mid L^{T,s}(t')\leq L^{T,s}(t)\}$. Let $G(k',r')$ be the
graph induced by the decomposition. The first and the
third invariants clearly hold. For the second invariant, consider a
simple pigeon-hole argument. For every non-leaf node $t$, the vertex
$f(t)$ has $c$ neighbours $f(t')$ in the child bags $X_{t'}$ of
$t$. Hence some $f({t'})$ must be larger in the order. This
guarantees the existence of a path as required. 

For $k'=1, r'\geq 1$, let $T(k',r')\eqqcolon T$ be a tree of depth $r'+1$ and
branching degree~$c$ with root $s$ and let $f$ be as before. Let $X_s:=\{f(s)\}$ and for each
$t'\in V(T)$ with parent $t\in V(T)$ let $X_{t'}:=\{f(t),f(t')\}$. Let
$G(k',r')$ be the graph induced by the decomposition. All invariants
hold by the same arguments as above. Note that~$G^1_1$ is the same
graph in both constructions and is hence well defined.

Now assume that $G(k',r'-1)$ and $G(k'-1,r')$ and their respective tree decompositions have been defined. 
Let $T(k',r')$ be the tree which is obtained by attaching~$c$ copies of $T(k'-1,r')$ as children to each leaf of $T(k',r'-1)$. We define the bags that belong to the copy of $T(k',r'-1)$, exactly as those of $T(k',r'-1)$. 
To every bag of a copy of $T(k'-1,r')$ which is attached to a leaf~$z$, we add $f'(z)$ (where $f'$ is the bijection from $T(k',r'-1)$). Let $G(k',r')$ be the graph induced by the decomposition. 

It is easy to see how to obtain the new bijection $f$ on the whole graph such that it satisfies the invariant. It is also easy to see that each bag contains at most $k'+1$ vertices.
For invariant~(ii), let $P_1=t_1,\ldots, t_m$ be some root-leaf path in $T(k',r'-1)$ which is ordered such that $L(f(t_1))<\ldots< L(f(t_m))$. Let $v=f(t_m)$ be the unique vertex in the leaf bag in which~$P_1$ ends. By the same argument as above, $v$ has many neighbours $s'$ such that $f^{-1}(s')$ is a root of a copy of $T(k'-1,r')$. One of them must be larger than $v$. In an appropriate copy we find a path $P_2$ with the above property by assumption. We attach the paths to find the path $P=t_1\ldots t_\ell$ in~$T(k',r')$. 

We finally show that $\Wreach_{r'}[G(k',r'),L,f(t_\ell)]={r'+k'\choose k'}$. This is again shown by an easy induction. Using the notation of the proof of Theorem~\ref{uwcoltw}, we observe that the graph $G_1$ is isomorphic to $G(k',r'-1)$ in $G(k',r')$ and $G_2$ is isomorphic to $G(k'-1,r')$. Furthermore we observe that the number of vertices reached in these graphs are exactly $w(k',r'-1)$ and $w(k'-1,r')$, so that the upper bound is matched. Similarly one shows that $\wcol_{r'}(G(k,r))={r'+k\choose k}$. The theorem follows by letting $G^k_r\coloneqq G(k,r)$.
\end{proof}

It is proven in \cite{kierstead03,NesetrilOdM12} that for every graph
$G$, $\wcol_r(G)\leq (\col_r(G))^r$. To our knowledge, there is no
example in the literature that verifies the exponential gap between
$\wcol_r$ and $\col_r$. As $\col_r(G)\le \tw(G)$ and $G_r^k$ contains
a $k+1$-clique, Theorem~\ref{thm:Gkr} provides an example that is close to an
affirmative answer for arbitrarily large generalised colouring numbers, in a rather uniform manner.

\begin{corollary}\label{colgap}
   For every $k\geq 1$, $r\geq 1$, there is a graph $G^k_r$ such that for
  all $1\le r'\le r$ we have
  $\col_{r'}(G^k_r)=k+1$ and $\wcol_{r'}(G^k_r) \geq \big(\frac{\col_{r'}(G^k_r)}{r'}\big)^{r'}$.
\end{corollary}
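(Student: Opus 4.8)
The plan is to show that the graphs $G^k_r$ constructed in \cref{thm:Gkr} already witness the corollary, so essentially nothing new has to be built. Recall from \cref{thm:Gkr} that $\tw(G^k_r)=k$ and $\wcol_{r'}(G^k_r)=\binom{r'+k}{k}$ for every $1\le r'\le r$, and that $G^k_r$ contains a $(k+1)$-clique (each bag of the tree decomposition built in \cref{thm:Gkr} induces a clique, and some bag has $k+1$ vertices since the width is exactly $k$).

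The first step is to pin down $\col_{r'}(G^k_r)$ exactly. Since $\col_1(G)=\min_L\max_v|\Sreach_1[G,L,v]|$ and $|\Sreach_1[G,L,v]|=1+(\text{number of }L\text{-smaller neighbours of }v)$, the quantity $\col_1(G)$ is the degeneracy of $G$ plus one. A graph of treewidth $k$ is $k$-degenerate, so $\col_1(G^k_r)\le k+1$; conversely a $(k+1)$-clique is a subgraph of minimum degree $k$, so the degeneracy of $G^k_r$ is at least $k$, whence $\col_1(G^k_r)=k+1$. Combining this with the monotonicity $\col_1(G)\le\col_2(G)\le\cdots\le\col_n(G)=\tw(G)+1$ recalled in \cref{sec:prelim}, one gets $k+1=\col_1(G^k_r)\le\col_{r'}(G^k_r)\le\col_n(G^k_r)=k+1$, so $\col_{r'}(G^k_r)=k+1$ for all $1\le r'\le r$.

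It then only remains to check the elementary inequality $\binom{r'+k}{k}\ge\bigl((k+1)/r'\bigr)^{r'}$, which together with the two facts above gives $\wcol_{r'}(G^k_r)=\binom{r'+k}{k}\ge\bigl((k+1)/r'\bigr)^{r'}=\bigl(\col_{r'}(G^k_r)/r'\bigr)^{r'}$, as required. Writing $\binom{r'+k}{k}=\binom{r'+k}{r'}=\frac{(k+1)(k+2)\cdots(k+r')}{r'!}$, the numerator is a product of $r'$ integers each at least $k+1$, hence at least $(k+1)^{r'}$, while $r'!\le (r')^{r'}$; dividing yields the bound.

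There is no real obstacle here, since all of the effort has already gone into \cref{thm:Gkr}. The only mildly delicate point is the two-sided squeeze that forces $\col_{r'}(G^k_r)$ to equal exactly $k+1$: it uses both the degeneracy lower bound coming from the clique and the treewidth upper bound, together with monotonicity of $\col_r$ in $r$. After that the remaining estimate is a one-line count of the factors appearing in the binomial coefficient.
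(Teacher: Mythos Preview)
Your proof is correct and follows exactly the approach the paper sketches in the paragraph preceding the corollary: use that $G^k_r$ contains a $(k{+}1)$-clique for the lower bound on $\col_{r'}$, the identity $\col_n(G)=\tw(G)+1$ together with monotonicity for the upper bound, and then the elementary estimate $\binom{r'+k}{r'}\ge((k+1)/r')^{r'}$. You have in fact been more careful than the paper, which writes ``$\col_r(G)\le\tw(G)$'' where $\tw(G)+1$ is meant.
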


 \begin{proof}
  Since $\col_{r'}(G^k_r)=k+1$, we have
 $$\wcol_{r'}(G^k_r)={r'+k \choose k}={k+r' \choose r'}\geq
  \left(\frac{k+r'}{r'}\right)^{r'}\geq
  \left(\frac{\col_{r'}(G^k_r)}{r'}\right)^{r'}.$$
 \end{proof}

As a further application of Theorem~\ref{thm:Gkr}, we now construct a class of graphs with polynomial expansion that has super-polynomial
weak colouring numbers. For a graph~$G$ denote by $G^{(r)}$ the exact $r$-subdivision
of $G$, that is, the graph obtained from~$G$ by replacing every edge by a path of length
$r+1$ (with $r$ vertices on it). 

\begin{theorem}\label{PolyExpWeak}
The class $\CCC=\{G^{(6\tw(G))}~:~G$ graph$\}$ has polynomial expansion and
super-polynomial weak colouring numbers.  
\end{theorem}
\begin{proof}
Let $r\in \N$ and let $G$ be a graph of treewidth $t\coloneqq \tw(G)$. Let $H$ be
the densest depth-$r$ minor of $G^{(6t)}$. If $r\geq t$, we conclude that 
$\epsilon(H)\leq t\leq r$, since~$G$ as a graph of treewidth $t$ is $t$-degenerate
and so are all its minors. On the other hand, if $r<t$, it is easy to see that every vertex 
of $H$ of degree greater than $2$ is adjacent only to vertices of degree at most $2$. 
Hence in this case $H$ is $2$-degenerate. We conclude that $\nabla_r(G^{(6t)})\leq 
r+2$, and hence~$\CCC$ is a class of polynomial (and even linear) expansion. 

To prove that $\CCC$ has super-polynomial weak colouring numbers, we first relate
the weak colouring numbers of the exact $k$-subdivision for any $k\in \N$ 
of a graph $G$ to the weak 
colouring numbers of $G$. We claim that 
$\wcol_{r\cdot (k+1)}(G^{(k)})\geq 
\frac{1}{2}\wcol_r(G)$. 

\pagebreak
To see this, consider an arbitrary order $L$ of $V(G^{(k)})$. We construct the 
following order~$L'$ of $V(G)$. For $u\in V(G)$ let $u_m\in V(G^{(k)})$ be 
the $L$-minimal vertex of $N_{k-1}(u)$ in~$G^{(k)}$. For $u,v\in V(G)$
we let $L'(u)<L'(v)$ if $L(u_m)<L(v_m)$ or if $L(u_m)=L(v_m)$ and $L(u)<L(v)$. 
Now fix some $v\in V(G)$ and consider the set $\Wreach_r[G,L',v]$. We observe
that for each $u\in \Wreach_r[G,L',v]$ the vertex $u_m$ is weakly $(r\cdot (k+1))$-reachable
from $v$ in $G^{(k)}$ with respect to $L$: we traverse the subdivided path $P$
from $v$ to $u$ in $G^{(k)}$ and, if we have not visited $u_m$ yet, append the path from $u$ to $u_m$. This path
has length at most $k\cdot r+(k-1)\leq r\cdot (k+1)$ and by definition of $L'$ and
the fact that $u$ is $L'$ minimal on the corresponding path in~$G$, 
$u_m$ is $L$-minimal on $P$. Now when counting weakly reachable vertices, 
$u_m$ may be counted twice, once for each end of the subdivided edge it lies on. 
We conclude that $2\cdot |\Wreach_{r\cdot(k+1)}[G^{(k)},L,v]|\geq |\Wreach_r[G,L',v]|$
for all $v\in V(G)$. As~$L$ was chosen arbitrarily, it follows that 
$\wcol_{r\cdot (k+1)}(G^{(k)})\geq 
\frac{1}{2}\wcol_r(G)$.

As we have proved in Theorem~\ref{thm:Gkr} for every $k,r\in \N$ 
there exists a graph $G_r^k$ of treewidth~$k$ that satisfies $\wcol_r(G_r^k)={r+k \choose r}$. Now for each $t\in\N$
we apply the theorem to $k=r=t$, yielding a graph $H\coloneqq G_t^t$ with 
$\wcol_t(H)=\binom{2t}{t}$. 
With our above observation we conclude that 
$\wcol_{t\cdot (6t+1)}(H^{(6t)})\geq \frac{1}{2}\wcol_t(H)=\frac{1}{2}{2t\choose t}\geq \frac{1}{2}\cdot\frac{4^t}{\sqrt{\pi t}}(1-\frac{1}{8t})$. 
Substituting $\frac{\sqrt{(1+24r)}-1}{12}$ for~$t$ gives us $\wcol_r(\CCC)\in 
\Omega((4-\epsilon)^{\frac{\sqrt{6r}}6})$ for any $\epsilon>0$, which is super-polynomial in $r$. 
\end{proof}

Theorem~\ref{PolyExpWeak} answers negatively the question whether graph classes of polynomial
expansion have polynomial weak colouring numbers. This question is 
attributed to Joret and Wood in~\cite{EsperetR18}. They also ask whether
graph classes of polynomial
expansion have polynomial strong colouring numbers. This was proved in the
meantime for
graphs excluding a fixed minor~\cite{HeuvelMQRS17}. 

\section{High-Girth Regular Graphs}\label{sec:reghighgirth}

The goal of this section is to study the generalised colouring numbers of graph classes with constant topological expansion (such as classes excluding a topological minor).
In light of~(\cite{zhu09}, Lemma 3.3), it is not surprising that such classes can have exponential weak colouring numbers. 
Surprisingly, we prove that, in fact, even classes of bounded degree (which are of the simplest classes that can exclude a topological minor) have superpolynomial colouring numbers, too. 
For this section, we let $n\coloneqq |V(G)|$.

\begin{theorem}\label{coldreg}
  Let $G$ be a $d$-regular graph of girth at least $4g+1$, where
  $d\geq 7$. Then for every $r\leq g$, 
  $$\col_r(G)\geq  \frac{d}{2}\left(\frac{d-2}{4}\right)^{2^{\lfloor logr \rfloor}-1}.$$
\end{theorem}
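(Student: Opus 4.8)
The plan is to exhibit, for an arbitrary linear order $L$ on $V(G)$, a single vertex $v$ whose strong $r$-reachable set is large. First I would fix $L$ and repeatedly peel off the $L$-largest vertex, so that at stage $i$ the current graph $G_i$ is an induced subgraph on the $i$ smallest vertices. Since $G$ is $d$-regular with girth at least $4g+1 > 2r$, locally around any vertex the graph looks like a $d$-regular tree up to radius $r$; the key combinatorial fact is that strong $r$-reachability from $v$ in $(G,L)$ captures, for the last few vertices to be eliminated, essentially all vertices at distance $\le r$ reached by paths through larger vertices. The natural approach is a recursive/doubling estimate matching the exponent $2^{\lfloor \log r\rfloor}$: a vertex strongly $r$-reachable via a path can be decomposed at its midpoint, and girth forbids the two halves from colliding, so the count roughly squares when $r$ doubles. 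This is the analogue, on the lower-bound side, of the inequality $\wcol_r \le \mathrm{adm}_r^{\,r}$ and of the recursion $w(k,r)=w(k,r-1)+w(k-1,r)$ used in the proof of \cref{uwcoltw}.

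More concretely, I would argue as follows. Let $f(r)$ denote the claimed bound $\tfrac{d}{2}\bigl(\tfrac{d-2}{4}\bigr)^{2^{\lfloor \log r\rfloor}-1}$, and prove by induction on $\lfloor\log r\rfloor$ that for every order $L$ there is a vertex $v$ with $|\Sreach_r[G,L,v]| \ge f(r)$. For the base case $r=1$: among the $L$-largest vertices, the very last one to be eliminated has all $d$ of its neighbours still present and smaller, so $\col_1(G) \ge d \ge \tfrac{d}{2}$. For the inductive step from $r$ to $2r$ (with $2r \le g$): take the vertex $v_0$ guaranteed at level $r$, look at the ball $B_{r}(v_0)$, which by the girth hypothesis is a $d$-regular tree; roughly $f(r)$ of its leaves are strongly $r$-reachable from $v_0$. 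Each such leaf $u$ sits at the top of its own tree-neighbourhood of radius $r$ that is disjoint (by girth $> 4r$) from the others, and restricting $L$ to that subtree and re-running the level-$r$ argument yields $\ge f(r)$ vertices strongly $r$-reachable from $u$ through vertices larger than $u$. Concatenating the $v_0$-to-$u$ path with the $u$-to-(target) path gives a path of length $\le 2r$ witnessing strong $2r$-reachability from $v_0$, and the disjointness (girth again) ensures all these targets are distinct, giving $\ge f(r)^2 / (\text{small loss}) \ge f(2r)$, where the factors of $\tfrac{d-2}{4}$ versus $\tfrac{d}{2}$ absorb the loss from counting only ``new'' branches at each doubling and from the midpoint vertex being shared.

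The main obstacle I anticipate is bookkeeping the interaction between the order $L$ and the tree structure so that the two halves of each length-$2r$ path genuinely compose into a single strongly reachable path — i.e.\ that the midpoint vertex $u$ is indeed smaller than all the targets reached from it, while still being reachable from $v_0$ through larger vertices. This is exactly where one must be careful: strong reachability requires the \emph{endpoint} to be the minimum of the path, so when we glue a $v_0 \to u$ segment to a $u \to w$ segment we need $L(w) < L(u) < $ (interior of first segment) and $L(w) <$ (interior of second segment); the inductive hypothesis applied inside the disjoint subtree around $u$ must be stated so as to deliver precisely this. Handling $r$ that is not a power of $2$ (so $\lfloor\log r\rfloor < \log r$) is then free, since $\Sreach_r \supseteq \Sreach_{r'}$ for $r' = 2^{\lfloor\log r\rfloor} \le r$ and girth $\ge 4g+1 \ge 4r+1$ still covers the smaller radius. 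I would also need the $d \ge 7$ hypothesis exactly to keep $\tfrac{d-2}{4} > 1$ so that the bound is genuinely exponential; this should drop out of the loss-factor computation.
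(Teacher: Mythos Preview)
Your gluing step does not work. If $u \in \Sreach_r[G,L,v_0]$ then $L(u) < L(v_0)$, so once $u$ becomes an interior vertex of the concatenated $v_0 \to u \to w$ path it violates the defining condition for strong reachability from $v_0$: every interior vertex must be $L$-larger than $v_0$. The conditions you list --- $L(w)<L(u)<\text{(interiors)}$ --- are what is needed for $w \in \Sreach_r[G,L,u]$, not for $w\in\Sreach_{2r}[G,L,v_0]$; the latter additionally requires $L(u)>L(v_0)$, which is incompatible with $u$ having been strongly reached from $v_0$ in the first place. There is also a secondary gap: your inductive hypothesis promises \emph{some} vertex in the restricted subtree with large strong reach, not that the particular leaf $u$ has large strong reach.

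The paper does not try to track a single vertex through the doubling. It works with the \emph{average} $c_r = \tfrac{1}{n}\sum_v |R_r(v)|$, where $R_r(v)=\Sreach_r[G,L,v]\setminus\Sreach_{r-1}[G,L,v]$, and observes that for any pair $u,w\in R_r(v)$ the path through $v$ (a genuine path, by girth) witnesses that the $L$-larger of $u,w$ strongly $2r$-reaches the $L$-smaller --- here $v$, not $u$, is the midpoint, and $L(v)$ exceeds both $L(u)$ and $L(w)$, so the interior condition holds. Summing $\binom{|R_r(v)|}{2}$ over all $v$ and applying Cauchy--Schwarz yields $c_{2r}\geq \tfrac12 c_r(c_r-1)$; starting from $c_1=d/2>3$ (this is exactly where $d\geq 7$ enters) one solves the recursion and pigeonholes to a single vertex only at the very end. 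The averaging is not cosmetic: a one-vertex version of the same pairing argument distributes $\binom{|R_r(v_0)|}{2}$ pairs over $|R_r(v_0)|$ target vertices and yields only a linear, not a quadratic, gain.
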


\begin{proof}
For an ordering $L$ of $G$, let $R_r(v)=\Sreach_r[G,L,v]\setminus
  \Sreach_{r-1}[G,L,v]$ and 
$U_r= \sum_{v\in V(G)}|R_r(v)|$.

Suppose that $r\leq g$ and notice that for $u,w \in R_r(v)$, we have
that either $u \in R_{2r}(w)$ or $w \in R_{2r}(u)$.  Therefore, every
vertex $v\in V(G)$ contributes at least ${|R_r(v)| \choose 2}$ times
to $U_{2r}$.  Moreover, since $r\leq g$, for every $u,w$ with $u \in
R_{2r}(w)$ there is at most one vertex $v\in V(G)$ such that $u,w \in
R_r(v)$ (namely the middle vertex of the unique $(u,v)$-path of length
$2r$ in $G$). It follows that for every $r\leq g$,
 \begin{align*}
   U_{2r} &\geq \sum\limits_{v\in V(G)}{|R_r(v)| \choose 2}
   = \frac{1}{2}\sum\limits_{v\in V(G)}|R_r(v)|^2-\frac{1}{2}\sum\limits_{v\in V(G)}|R_r(v)|\\
   &\geq \frac{1}{2n}\Big(\sum\limits_{v\in V(G)}|R_r(v)|\Big)^2-\frac{1}{2}U_r 
   = \frac{1}{2n}U_r^2-\frac{1}{2}U_r
 \end{align*}
where for the second inequality we have used the Cauchy-Schwarz
inequality.

Let $c_r=\frac{U_r}{n}.$ Then for every $r\leq g,$ we obtain
$c_{2r}\geq \frac{1}{2}c_r\left(c_r-1\right)$.  But, 
$$U_1=\sum_{v\in V(G)}|\Sreach_1[G,L,v]\setminus\{v\}|=\frac{1}{2}dn\,,$$ so that
  $c_1=\frac{d}{2}>3$, since $d\geq 7$.  By induction and because
  $c_{2r}\geq \frac{1}{2}c_r\left(c_r-1\right)$, for every $r=2^{r'}\leq g$
    we have $c_{2r}\geq c_r\geq 3\,$. Therefore $c_r\geq c_1=\frac{d}{2}$. 
Again because $c_{2r}\geq \frac{1}{2}c_r\left(c_r-1\right)$, for every $r=2^{r'}\leq g$ we have
      $$c_{2r}\geq \frac{1}{2}c_r^2-\frac{1}{2}c_r\geq \frac{1}{2}c_r^2-\frac{1}{d}c_r^2=\frac{d-2}{2d}c_r^2\,.$$ 
Then  for every $r=2^{r'}\leq g$, it easily follows by induction that $c_r\geq \frac{d}{2}\Big(\frac{d-2}{4}\Big)^{r-1}\,.$

Finally, let $C_r=\frac{1}{n}\sum_{v\in V(G)}|\Sreach_r[G,L,v]|$. Then, $C_r=\sum_{i=1}^rc_i$. In particular, it is
$C_r\geq c_{2^{\lfloor logr \rfloor}} \geq \frac{d}{2}\left(\frac{d-2}{4}\right)^{2^{\lfloor logr \rfloor}-1}$,
and hence for every $r\leq g$ there exists a vertex $v_r \in V(G)$
such that
$|\Sreach_r[G,L,v_r]| \geq
\frac{d}{2}\left(\frac{d-2}{4}\right)^{2^{\lfloor logr \rfloor}-1}$.
Since $L$ was arbitrary, the theorem follows.
\end{proof}

Unfortunately, our proof above makes sense only if $d\geq 7$, which is
also best possible with this approach, since for $d\leq6$, we have
$c_1\leq 3$. Then for the recurrence relation $c_{2r}=
\frac{1}{2}c_r^2-\frac{1}{2}c_r$, we get $c_{2^i}\leq c_{2^{i-1}}$ for
every $i$ and we clearly cannot afford to have $c_{2^i}$
non-increasing. Somewhat better constants can be achieved if in the
estimation of $c_r$ one uses that $c_{2^i}\geq c_{2^{i_0}}$, for
$i\geq i_0>0$, instead of the relation $c_{2^i}\geq c_1$, as in
our proof. Since $d\geq 7$ would be still the best that we would be
able to do, we adopted the simpler approach for easier
readability. 

Actually, by combining a known result for the $\nabla_r$ of high-girth 
regular graphs (\cite{DiestRemp04},\cite{NesetrilOdM12} Exercise 4.2) 
and (\cite{zhu09}, Lemma 3.3), we get exponential lower bounds for the 
weak colouring number of high-girth $d$-regular graphs, already for $d\geq 3$. In particular,
for a $3$-regular graph~$G$ of high enough girth, $\wcol_r(G)\geq 3\cdot 2^{\lfloor r/4 \rfloor-1}$.
The methods above can be extended to get appropriate bounds in terms of their degree 
for regular graphs of higher degree, but by adopting a more straightforward approach,
we get better bounds for high-girth $d$-regular graphs for $d\geq 4$.

\begin{theorem}\label{wcoldreg}
  Let $G$ be a $d$-regular graph of girth at least $2g+1$, where
  $d\geq 4$. Then for every $r\leq g$, 
  $$\wcol_r(G)\geq \frac{d}{d-3}\left(\Big(\frac{d-1}{2}\Big)^r-1\right)\,.$$
\end{theorem}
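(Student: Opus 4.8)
The plan is to fix an arbitrary linear order $L$ of $V(G)$ and exhibit a single vertex $v$ with $|\Wreach_r[G,L,v]|$ at least the claimed bound; since $L$ is arbitrary this gives the lower bound on $\wcol_r(G)$. The natural candidate is the $L$-smallest vertex, or more flexibly, a vertex that is minimal within a large ball. The key structural fact is that in a graph of girth at least $2g+1$, for every vertex $u$ and every $\ell \le g$ the ball $N_\ell^G(u)$ induces a tree (the breadth-first search tree from $u$), and every vertex at distance exactly $\ell$ is reached by a unique path of length $\ell$. So the whole analysis takes place inside a single BFS tree of depth $r$ rooted at a well-chosen vertex.

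First I would argue that we may assume the root $v$ is the $L$-minimum vertex of its depth-$r$ ball: concretely, pick any vertex $x$, let $v$ be the $L$-minimum vertex in $N_r^G(x)$, and note that then every vertex $u$ on the unique path from $v$ to any $w \in N_r^G(v)$ (a path living inside the BFS tree from $v$, hence of length equal to $\dist(v,w)$) satisfies $L(u) \ge L(v)$? That is too strong. Instead, the cleaner route is: take $v$ to be $L$-minimal overall among all vertices, consider the BFS tree $B$ of depth $r$ at $v$, and count how many vertices of $B$ are weakly $r$-reachable from $v$. A vertex $w \in B$ at depth $\ell$ is weakly $r$-reachable from $v$ via its unique tree path $P$ iff $L(w) \le L(u)$ for all $u \in V(P)$, i.e. iff $w$ is the $L$-minimum of its own tree path. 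So I want to lower bound the number of tree vertices that are the minimum along their root path. Here the high degree enters: every internal node of $B$ has at least $d-1$ children (girth $> 2g+1 \ge 2r+1$ forces the BFS tree to genuinely branch with branching factor $d-1$ below the root, $d$ at the root).

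The combinatorial heart is then: in a rooted tree where the root has $d$ children and every other internal node has $d-1$ children, and depth is $r$, with an arbitrary injective labelling, how many nodes are the minimum along their root-to-node path? I would prove by induction on $r$ that this count is at least $f(r) := \frac{d}{d-3}\big((\frac{d-1}{2})^r - 1\big)$. The inductive step: the root itself always counts. Among the $d$ subtrees hanging off the root (each isomorphic to a depth-$(r-1)$ tree with branching factor $d-1$ throughout), consider a subtree rooted at a child $c$; the nodes in that subtree minimal along their path within the subtree that are also above the root's label still count globally, and one can show at least a $(d-1)/2$-fraction... actually the cleaner bookkeeping is: let $g(r)$ be the guaranteed count in a tree where even the root has $d-1$ children; show $g(r) \ge 1 + \frac{d-1}{2} g(r-1)$ by observing that in the $d-1$ child-subtrees, by an averaging/pigeonhole argument at least $\lceil (d-1)/2 \rceil \ge (d-1)/2$ of the children beat the root's label, and within each such subtree the induction applies with the child playing the role of a new root whose label can be ignored downward. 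Solving $g(r) \ge 1 + \frac{d-1}{2}g(r-1)$ with $g(0)=1$ gives $g(r) \ge \frac{(d-1)/2)^{r+1}-1}{(d-1)/2 - 1} = \frac{2}{d-3}\big((\frac{d-1}{2})^{r+1}-1\big)$, and accounting for the full degree $d$ at the genuine root yields exactly $\frac{d}{d-3}\big((\frac{d-1}{2})^r-1\big)$.

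The main obstacle I anticipate is making the ``at least half the children beat the root'' step airtight while simultaneously ensuring the surviving subtrees are still analysable by induction — the subtlety is that a child $c$ with $L(c) > L(\text{root})$ contributes $g(r-1)$ nodes computed with respect to the restriction of $L$ to $c$'s subtree, and one must check these are genuinely weakly $r$-reachable from the original root (they are, since the path from root through $c$ down to such a node is increasing for the first step and then minimal-along-the-subtree-path). Getting the constant $\frac{d}{d-3}$ exactly right, rather than just exponential in $r$, requires being careful that the $d$ vs $d-1$ distinction at the root is handled once and that the geometric series is summed without slack; this is where $d \ge 4$ is needed so that $\frac{d-1}{2} \ge \frac{3}{2} > 1$ and the recursion actually grows. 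I would also double-check the girth requirement: we need the depth-$r$ BFS tree to be a tree and to have the stated branching, which needs girth $\ge 2r+1$, and since $r \le g$ the hypothesis girth $\ge 2g+1$ suffices.
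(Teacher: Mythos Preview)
Your approach has a genuine gap at its foundation: choosing $v$ to be the $L$-minimal vertex makes $|\Wreach_r[G,L,v]|=1$, because $v$ lies on every path starting at $v$ and nothing else can be the minimum along such a path. Switching to the $L$-maximal vertex does not rescue the argument either: the adversary can place the $d$ neighbours of $v$ as the $d$ smallest vertices in the whole order, and then no vertex at depth $\ge 2$ in the BFS tree is below any of those neighbours, so $|\Wreach_r[G,L,v]|=d+1$ regardless of $r$. In fact no rule that singles out one vertex in advance can work here, which is also why your recursive step breaks. The claim that ``at least $(d-1)/2$ of the children beat the root's label'' is simply false for an arbitrary internal root of the BFS tree under an adversarial $L$; nothing prevents all $d-1$ children of some node from being larger than it. And the verification you sketch in the obstacle paragraph is backwards: if $L(c)>L(\text{root})$ and $w$ is minimal along the $c$-to-$w$ path, you only know $L(w)\le L(c)$, not $L(w)\le L(\text{root})$, so $w$ need not be weakly reachable from the root at all.

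The paper sidesteps the problem of identifying a good vertex by an averaging argument over all vertices simultaneously. Writing $Q_i(v)=\Wreach_i[G,L,v]\setminus\Wreach_{i-1}[G,L,v]$ and $S_i=\sum_v|Q_i(v)|$, it observes that whenever $u\in Q_i(v)$ and $w$ is a neighbour of $v$ off the unique $(u,v)$-path, one of $u,w$ lies in $Q_{i+1}$ of the other; since each length-$(i+1)$ path contains exactly two length-$i$ subpaths this yields $2S_{i+1}\ge(d-1)S_i$. Starting from $S_1=\tfrac{dn}{2}$ and summing the resulting geometric series gives $\tfrac{1}{n}\sum_v|\Wreach_r[G,L,v]|\ge\tfrac{d}{d-3}\big((\tfrac{d-1}{2})^r-1\big)$, and then some vertex attains the average. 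The factor $\tfrac{d-1}{2}$ that you were trying to force locally by a pigeonhole on children arises here globally, from the double-count, and this is where the argument genuinely needs the whole vertex set rather than a single BFS tree.
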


\begin{proof}
Let $L$ be an ordering of $G$. For $u,v \in V(G)$ with $d(u,v)\leq
  r$, let $P_{uv}$ be the unique $(u,v)$-path of length at most $r$,
  due to the girth of $G$.  Let $$Q_r(v)=\Wreach_r[G,L,v]\setminus  \Wreach_{r-1}[G,L,v],$$ 
  and define $S_r= \sum_{v\in V(G)}|Q_r(v)|$.
For $r\leq g-1$, a vertex $u \in Q_r(v)$ and $w \in N(v)\setminus V(P_{uv})$, it holds that
either $w\in Q_{r+1}(u)$ or $u\in Q_{r+1}(w)$.  Notice that
$|N(v)\setminus V(P_{uv})|=d-1$ and that $P_{vu}$ and $P_{uw}$ are
unique.  Therefore, every pair of vertices $v,u$ with $u \in Q_r(v)$
corresponds to at least $d-1$ pairs of vertices $u,w$ with either $u \in
Q_{r+1}(w)$ or $w \in Q_{r+1}(u)$ and hence contributes at least $d-1$
times to $S_{r+1}$.  Since every path of length $r+1$ contains exactly
two subpaths of length $r$, we have for every $r \leq g-1 $ that
$$2S_{r+1}\geq (d-1)S_r.$$ Let $w_r=\frac{S_r}{n}$. Then, for every $r\leq
g-1$ we have $w_{r+1} \geq \frac{d-1}{2}w_r\,.$

But, $$\sum_{v\in V(G)}|\Wreach_1[G,L,v]\setminus \{v\}|=\frac{1}{2}dn\,,$$ so that
$w_1=\frac{d}{2}$. 

It easily follows by induction that for every $r\leq g$, we have
$w_r\geq \frac{d}{2}\left(\frac{d-1}{2}\right)^{r-1}\,.$ Finally, let
$W_r=\frac{1}{n}\sum_{v\in V(G)}|\Wreach_r[G,L,v]|$. Then,
$$W_r=\sum_{i=1}^rw_i \geq \sum_{i=1}^r\frac{d}{2}\left(\frac{d-1}{2}\right)^{i-1}
=\frac{d}{d-3}\left(\left(\frac{d-1}{2}\right)^r-1\right),$$ and hence we have that for
every $r\leq g$ there exists a vertex $v_r \in V(G)$ such that
$|\Wreach_r[G,L,v_r]| \geq \frac{d}{d-3}\big(\left(\frac{d-1}{2}\right)^r-1\big)\,.$ Since $L$ was
arbitrary, the theorem follows.
\end{proof}

\begin{rmrk}
  Notice that for every $d$-regular graph $G$ and every radius $r$, we
  have $adm_r(G)\leq \Delta(G)+1=d+1$, so by Theorem~\ref{coldreg} for
  every $d\geq 7$ and every $r\leq g$, the $d$-regular graphs of girth
  at least $2g+1$ verify the exponential gap between $adm_r,\Delta(G)$
  and $\col_r,\wcol_r$ of the known relations from Section~\ref{sec:prelim}.
\end{rmrk}

\section{Neighbourhood Covers}\label{sec:covers}

Neighbourhood covers of small radius and small size play a key role in the
design of many data structures for distributed systems. For references about
neighbourhood covers, we refer the reader to \cite{abraham2007strong}.

For~$r\in\N$, an \emph{$r$-neighbourhood cover}~$\XXX$ of a graph~$G$ is a set
of connected subgraphs of~$G$ called \emph{clusters}, such that for every
vertex~$v\in V(G)$ there is some~$X\in\XXX$ with~$N_r(v)\subseteq X$.

The \emph{radius}~$\rad(\XXX)$ of a cover~$\XXX$ is the maximum radius of any of
its clusters. The \emph{degree}~$d^\XXX(v)$ of~$v$ in~$\XXX$ is the number of
clusters that contain~$v$. A class $\CCC$ \emph{admits sparse neighbourhood
covers} if for every $r\in\N$, there exists $c\in \N$ such that for all
$\epsilon>0$, there is $n_0\in\N$ such that for all $G\in \CCC$ of order at
least $n_0$, there exists an $r$-neighbourhood cover of radius at most $c\cdot
r$ and degree at most $|V(G)|^\epsilon$. For any graph $G$, one can construct an
$r$-neighbourhood cover of radius $2r-1$ and degree $2k\cdot |V(G)|^{1/r}$ and
asymptotically these bounds cannot be improved \cite{thorup05}. 

\begin{theorem}[Theorem 16.2.4 of \cite{peleg00}, \cite{thorup05}]\label{thm:nocover} 
For every $r$ and $k\geq 3$, there exist infinitely many graphs $G$ for which every $r$-neighbourhood cover of radius at most $k$ has degree $\Omega(|V(G)|^{1/k})$.  
\end{theorem}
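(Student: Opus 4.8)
The plan is to realise the lower bound on sparse graphs of large girth, where clusters of small radius are forced to be tree-like and can therefore only "cover" few edges each. Fix $r\ge 1$ and $k\ge 3$. As extremal instances I would take $n$-vertex graphs $G$ with $\operatorname{girth}(G)\ge 2k+2$ and $|E(G)|=\Omega(n^{1+1/k})$; for infinitely many $n$ such $G$ are supplied by classical extremal constructions (incidence graphs of generalised polygons for the relevant values of $k$, and the Erd\H{o}s--Sachs / Lazebnik--Ustimenko--Woldar constructions in general). This is the only non-elementary ingredient, and matching the precise exponent $1/k$ for \emph{every} $k$ is exactly the hard extremal problem behind the girth conjecture; everything else is a short counting argument.

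The combinatorial core is the following elementary lemma: if $\operatorname{girth}(G)\ge 2k+2$ and $X$ is a connected subgraph of $G$ with $\rad(X)\le k$, then $G[V(X)]$ is a tree. Choosing a centre $c$ of $X$ gives $V(X)\subseteq N_k^G(c)$, so it suffices to show $G[N_k^G(c)]$ is a tree, and I would do this by induction on $k$: going from $N_{k-1}^G(c)$ to $N_k^G(c)$, every new vertex $u$ (at distance exactly $k$ from $c$) has exactly one neighbour in $N_{k-1}^G(c)$ and no neighbour at distance $k$ — otherwise two shortest paths back to $c$ together with one or two extra edges through $u$ (and possibly a second new vertex) close up into a cycle of length at most $2k+1<\operatorname{girth}(G)$. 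Hence $G[N_k^G(c)]$ is obtained from the tree $G[N_{k-1}^G(c)]$ by attaching pendant vertices, so it is a tree; and the induced subgraph on the subset $V(X)$ is a tree as well, being connected.

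Now let $\XXX$ be any $r$-neighbourhood cover of such a $G$ with $\rad(\XXX)\le k$. By the lemma, $|E(G[V(X)])|=|V(X)|-1$ for every cluster $X\in\XXX$. On the other hand, every edge $\{u,v\}\in E(G)$ lies inside some cluster, since $v\in N_1^G(u)\subseteq N_r^G(u)$ and some $X\in\XXX$ contains $N_r^G(u)$, whence $u,v\in V(X)$. Summing over clusters,
\[
\sum_{X\in\XXX}\bigl(|V(X)|-1\bigr)=\sum_{X\in\XXX}|E(G[V(X)])|\ \ge\ |E(G)|,
\]
so $\sum_{X\in\XXX}|V(X)|\ge |E(G)|$. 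Exchanging the order of summation, $\sum_{X\in\XXX}|V(X)|=\sum_{v\in V(G)}d^\XXX(v)\le |V(G)|\cdot\max_{v\in V(G)}d^\XXX(v)$, and therefore the degree of the cover satisfies $\max_v d^\XXX(v)\ge |E(G)|/|V(G)|=\Omega(|V(G)|^{1/k})$, as claimed.

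The main obstacle is thus the construction rather than the argument: the count is robust and tolerates weaker inputs, so with girth-$(2k+2)$ graphs having only $n^{1+\Omega(1/k)}$ edges the same proof still yields a degree lower bound polynomial in $n$ with exponent $\Theta(1/k)$, the unconditional version; the exact $n^{1/k}$ holds whenever the corresponding extremal graphs do (unconditionally for small $k$ via generalised polygons, and otherwise up to the girth conjecture). Two minor remarks: the radius $r$ enters only through $N_r^G\supseteq N_1^G$, so the bound is uniform in $r$; and the hypothesis $k\ge 3$ is what makes the exponent $1/k$ a meaningful complement to the constructive side mentioned before the theorem.
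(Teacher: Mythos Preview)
The paper does not give its own proof of this theorem; it is quoted from Peleg's book and Thorup--Zwick. Your argument is the standard one and is correct: in a graph of girth at least $2k+2$, every cluster of radius at most $k$ induces a tree, so a double count of vertex--cluster incidences against edges forces the maximum degree of the cover to be at least the edge density. The paper in fact records precisely this combinatorial step (stated, not proved) as \cref{lem:nocover}, and pairs it with the Lazebnik--Ustimenko--Woldar construction (\cref{lem:girthexists}) to feed into the proof of \cref{thm:nc}; so your approach is not merely compatible with the paper's, it is the same mechanism the paper relies on downstream.

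Your caveat about the girth conjecture is exactly right and worth keeping: the precise exponent $1/k$ for \emph{every} $k$ is only as unconditional as dense high-girth constructions are, while the $\Theta(1/k)$ exponent is unconditional. One cosmetic remark: in the tree lemma you could shorten the induction by observing directly that any cycle inside $G[N_k^G(c)]$ yields, via two shortest paths to $c$ from adjacent cycle vertices, a closed walk of length at most $2k+1$, contradicting the girth bound; but your inductive version is fine.
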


For restricted classes of graphs, better covers exist. The most general results
are that a class excluding a complete graph on $t$ vertices as a minor
admits an $r$-neighbourhood cover of radius $\Oof(t^2\cdot r)$ and degree
$2^{\Oof(t)}t!$ \cite{abraham2007strong}, as well as the following result from
\cite{GKS14}.

\begin{theorem}[\cite{GKS14}]\label{thm:alg-covers} 
Let~$\CCC$ be a nowhere dense class of graphs. There is a function~$f$ such that for all~$r\in\N$ and~$ \epsilon>0$ and all graphs~$G\in\CCC$ with~$|V(G)|\geq f(r,\epsilon)$, there exists an~$r$-neighbourhood cover of radius at most~$2r$ and maximum degree at most~$|V(G)|^\epsilon$. More precisely, if $\wcol_{2r}(G)=d$, then  there exists an $r$-neighbourhood cover of radius at most $2r$ and maximum degree at most $d$. 
\end{theorem}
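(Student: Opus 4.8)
The plan is to prove the refined ``if $\wcol_{2r}(G)=d$\dots'' statement directly, and then obtain the general claim from the known characterisation of nowhere dense classes through the weak colouring numbers. Fix $r\in\N$ and $G$, suppose $\wcol_{2r}(G)=d$, and choose $L\in\Pi(G)$ with $|\Wreach_{2r}[G,L,v]|\le d$ for all $v\in V(G)$. For each $u\in V(G)$ set
\[
  C_u \;=\; \{\,v\in V(G)\mid u\in\Wreach_{2r}[G,L,v]\,\},
\]
let $X_u=G[C_u]$, and let $\XXX=\{X_u\mid u\in V(G)\}$.

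First I would verify the easy properties. Each $X_u$ is nonempty (as $u\in C_u$ via the length-$0$ path) and connected, with $u$ at distance at most $2r$ from every vertex of $X_u$: given $v\in C_u$, pick a path $P$ of length $\ell\le 2r$ from $v$ to $u$ with $L(y)\ge L(u)$ for all $y\in V(P)$; for every $y\in V(P)$ the terminal segment of $P$ from $y$ to $u$ still has this property, so $y\in C_u$ and $P\subseteq X_u$. Hence $\rad(\XXX)\le 2r$. The degree bound is immediate from the definition: $d^{\XXX}(v)=|\{u\mid v\in C_u\}|=|\Wreach_{2r}[G,L,v]|\le d$.

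The crux is the covering property, which is where I expect the real work to be. Given $v\in V(G)$, let $u$ be the vertex of $\Wreach_r[G,L,v]$ with smallest $L$-value; I claim $N_r^G(v)\subseteq X_u$. Fix $w$ with $\dist^G(v,w)\le r$. Since $u\in\Wreach_r[G,L,v]$, there is a path $P_1$ of length $\le r$ from $v$ to $u$ with $L(y)\ge L(u)$ for every $y\in V(P_1)$. Let $P_2$ be a shortest $v$--$w$ path, so it has length $\le r$, and let $x$ be the vertex of $P_2$ with smallest $L$-value. The segment of $P_2$ from $v$ to $x$ witnesses $x\in\Wreach_r[G,L,v]$, so $L(x)\ge L(u)$ by the choice of $u$; since $x$ minimises $L$ on $P_2$, every vertex of $P_2$ has $L$-value $\ge L(u)$. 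Concatenating $P_2$ (traversed from $w$ to $v$) with $P_1$ gives a walk from $w$ to $u$ of length $\le 2r$ all of whose vertices have $L$-value $\ge L(u)$; any path contained in this walk witnesses $u\in\Wreach_{2r}[G,L,w]$, i.e.\ $w\in C_u$. Thus $N_r^G(v)\subseteq X_u$ (in particular $v$ itself is covered), and $\XXX$ is an $r$-neighbourhood cover of radius at most $2r$ and maximum degree at most $d=\wcol_{2r}(G)$.

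It remains to deduce the first assertion for a nowhere dense class $\CCC$. Here I would invoke the characterisation (due to Zhu~\cite{zhu09} and \NOdM~\cite{NesetrilOdM12}) that a monotone class is nowhere dense if and only if for every $s\in\N$ and $\epsilon>0$ there is $n_0$ with $\wcol_s(G)\le|V(G)|^{\epsilon}$ for all $G\in\CCC$ on at least $n_0$ vertices. Defining $f(r,\epsilon)$ to be the threshold supplied by this characterisation for $s=2r$ and the given $\epsilon$, every $G\in\CCC$ with $|V(G)|\ge f(r,\epsilon)$ satisfies $\wcol_{2r}(G)\le|V(G)|^\epsilon$, and feeding $d=\wcol_{2r}(G)$ into the construction above yields a cover of radius $\le 2r$ and degree $\le|V(G)|^\epsilon$. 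The only genuinely delicate point in the whole argument is the covering step: choosing $u$ as the $L$-minimum of the \emph{radius-$r$} weakly reachable set of $v$ is exactly what forces the minimum of the arbitrary shortest path $P_2$ to lie above $u$, so that the concatenated walk never dips below $u$ and truly witnesses weak $2r$-reachability; everything else is bookkeeping.
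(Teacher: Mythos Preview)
The paper does not contain a proof of this theorem; it is quoted from \cite{GKS14} and used as a black box (the paper's own contribution in \cref{sec:covers} is the converse, \cref{thm:nc}). Your argument is correct and is in fact precisely the construction of \cite{GKS14}: define the clusters $C_u=\{v\mid u\in\Wreach_{2r}[G,L,v]\}$, observe that the degree of $v$ is $|\{u\mid v\in C_u\}|=|\Wreach_{2r}[G,L,v]|\le d$ and that each $C_u$ has radius at most $2r$ with centre $u$, and for the covering property assign $v$ to the $L$-minimum $u$ of $\Wreach_r[G,L,v]$, using that the $L$-minimum of any shortest $v$--$w$ path of length $\le r$ lies in $\Wreach_r[G,L,v]$ and hence sits above $u$.

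One minor remark on your deduction of the first sentence: you quote the characterisation for \emph{monotone} classes, whereas the theorem does not assume monotonicity. This is harmless, since you only use the forward direction ``nowhere dense $\Rightarrow$ $\wcol_s(G)\le|V(G)|^\epsilon$ for large $G$'', and that direction holds for arbitrary nowhere dense classes (it follows, for instance, from $\wcol_r(G)\le\adm_r(G)^r$ combined with \cref{thm:adm} and the definition of nowhere denseness via $\widetilde\nabla_r$ given in \cref{sec:colourings}).
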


Hence our new bounds for the weak colouring numbers on restricted graph classes
immediately imply improved $r$-neighbourhood covers on these classes. 

We show that for monotone classes the converse of Theorem~\ref{thm:alg-covers} is also true. We first observe that the lower bounds in Theorem~\ref{thm:nocover} come from a well known somewhere dense class. 

\begin{lemma}\label{lem:nocover}
Let $d\geq1, k\geq 2$ and let $G$ be a graph of girth at least $k+1$ and edge density at least $d$. Then every $1$-neighbourhood cover of radius at most $k$ has degree at least $d$. 
\end{lemma}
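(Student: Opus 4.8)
Fix a $1$-neighbourhood cover $\XXX$ of $G$ of radius at most $k$, and let $D$ denote its degree, i.e.\ $D=\max_{v\in V(G)}d^\XXX(v)$. The plan is to show $\sum_{X\in\XXX}|V(X)|\ge |E(G)|$; since $\sum_{X\in\XXX}|V(X)|=\sum_{v\in V(G)}d^\XXX(v)\le D\cdot|V(G)|$ and $|E(G)|=\epsilon(G)\cdot|V(G)|\ge d\cdot|V(G)|$, this immediately gives $D\ge d$. For the main inequality I would, for every vertex $v$, fix a cluster $X_v\in\XXX$ with $N_1(v)\subseteq V(X_v)$; note that then every edge incident to $v$ has both endpoints in $V(X_v)$, and in fact lies in $E(X_v)$ (its endpoints form part of the star $N_1(v)$ whose vertex set is contained in the connected subgraph $X_v$, which must contain a path realising that edge — more carefully, one only needs that both endpoints are in $V(X_v)$).

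Next I would set up a charging scheme. Fix an arbitrary linear order on $V(G)$ and charge every edge $\{u,v\}$ with $u<v$ to the cluster $X_u$; this is legitimate because $v\in N_1(u)\subseteq V(X_u)$, so the whole edge sits inside $V(X_u)$. Every edge is charged exactly once, so $|E(G)|=\sum_{X\in\XXX}c(X)$, where $c(X)$ is the number of edges charged to $X$. It therefore suffices to prove $c(X)\le|V(X)|$ for every $X\in\XXX$. Denoting by $H_X$ the subgraph of $G$ formed by the edges charged to $X$, all vertices of $H_X$ lie in $V(X)$, $H_X$ is a subgraph of the connected graph $X$, and — crucially — $H_X$ inherits girth at least $k+1$ from $G$.

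The heart of the argument is the bound $|E(H_X)|\le|V(X)|$, and this is where the girth hypothesis enters against the radius bound. Since $X$ is connected of radius at most $k$, pick a centre $c$ with $\dist^X(c,x)\le k$ for all $x\in V(X)$, and run a BFS from $c$ in $X$. Any non-tree edge of $X$ closes a cycle whose length is controlled by the BFS levels of its endpoints; combined with girth $\ge k+1$, I would argue that the non-tree edges can be organised so that $H_X$ carries at most one more edge than a spanning forest of $V(X)$ per "independent cycle", and that the radius-$k$/girth-$(k+1)$ interplay, together with the fact that every charged edge has its \emph{smaller} endpoint assigned to $X$ (so charged edges fan out from the set $W_X=\{u:X_u=X\}\subseteq V(X)$), forces $|E(H_X)|\le|V(X)|$. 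I expect this per-cluster estimate to be the main obstacle: one must rule out a radius-$k$ cluster accumulating many short cycles while still being charged too many edges, and getting the constant exactly $1$ (rather than some $\alpha(k)$) is the delicate point. Once $c(X)\le|V(X)|$ is established for all $X$, summing gives $|E(G)|=\sum_X c(X)\le\sum_X|V(X)|\le D\cdot|V(G)|$, hence $D\ge\epsilon(G)\ge d$, as required.
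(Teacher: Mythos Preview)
The paper states this lemma without proof, so there is no argument to compare against. Evaluating your proposal on its own merits: the double-counting skeleton is fine, but the per-cluster bound $c(X)\le|V(X)|$ that you yourself flag as ``the main obstacle'' is never established --- and under the stated girth hypothesis it cannot be, because the lemma itself is false as stated. Take $G=K_n$ and $k=2$: the girth is $3=k+1$, the edge density is $(n-1)/2$, yet the single-cluster cover $\XXX=\{X\}$ with $X$ a spanning star of $K_n$ has radius $1\le k$ and degree~$1$. Here every edge is charged to the unique cluster, giving $c(X)=\binom{n}{2}\gg n=|V(X)|$; your BFS-from-the-centre sketch cannot rescue this, since each non-tree edge closes a triangle and girth $\ge 3$ permits triangles.

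The intended hypothesis is almost certainly girth greater than $2k$ (girth $\ge 2k+2$ gives the cleanest argument), and this stronger form still suffices for the application in \cref{thm:nc}, where one is free to choose the girth of~$H$. With that correction your outline goes through and in fact simplifies: for each cluster $X$ with centre $c$, any edge of $G$ between two vertices of $V(X)$ would close, via the BFS tree of $X$ rooted at $c$, a cycle of length at most $2k+1<\text{girth}$, so $G[V(X)]$ is a tree and $|E(G[V(X)])|=|V(X)|-1$. Since every edge of $G$ lies in $G[V(X_u)]$ for either endpoint $u$, summing gives
\[
|E(G)|\le\sum_{X\in\XXX}|E(G[V(X)])|<\sum_{X\in\XXX}|V(X)|=\sum_{v\in V(G)}d^\XXX(v)\le D\cdot|V(G)|,
\]
hence $D\ge\epsilon(G)\ge d$. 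No linear order or charging scheme is needed.
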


\begin{lemma}[\cite{lazebnik1995new}]\label{lem:girthexists}
Let $r\geq 5$. There are infinitely many graphs $G$ of girth at least $4r$ with edge density at least $c_0\cdot |V(G)|^{1/(3(r-1))}$ for some constant $c_0>0$. 
\end{lemma}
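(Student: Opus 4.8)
The plan is to invoke the explicit algebraic graphs $D(k,q)$ of Lazebnik, Ustimenko and Woldar~\cite{lazebnik1995new}, defined over the finite field $\mathbb{F}_q$ for a prime power $q$, together with their connected components $CD(k,q)$. The facts we need from~\cite{lazebnik1995new} are: $CD(k,q)$ is a connected $q$-regular bipartite graph; its girth is at least $k+5$ whenever $k$ is odd; and it has at most $2q^{k-\lfloor(k+2)/4\rfloor+1}$ vertices (for $k\ge 3$). First I would fix the parameter $k\coloneqq 4r-5$. Since $r$ is an integer, $4r$ is even, so $k$ is odd; and $k\ge 15\ge 3$ because $r\ge 5$. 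Hence $G\coloneqq CD(k,q)$ has girth at least $k+5=4r$, which is the girth requirement.

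Next comes the vertex count. With $k=4r-5$ we get $\lfloor(k+2)/4\rfloor=\lfloor r-\tfrac34\rfloor=r-1$, so $k-\lfloor(k+2)/4\rfloor+1=3(r-1)$, and therefore $n\coloneqq|V(G)|\le 2q^{3(r-1)}$. Since $G$ is $q$-regular we have $|E(G)|=nq/2$, so its edge density is exactly $\epsilon(G)=q/2$. From the vertex bound, $q\ge(n/2)^{1/(3(r-1))}$, and hence
$\epsilon(G)=q/2\ge\tfrac12(n/2)^{1/(3(r-1))}\ge c_0\,n^{1/(3(r-1))}$
for an absolute constant $c_0>0$; since $3(r-1)\ge 12$ for $r\ge 5$, any $c_0\le\tfrac13$ works. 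Finally, letting $q$ range over all prime powers produces infinitely many graphs with $n\to\infty$ (they are pairwise non-isomorphic for large~$q$), each of girth at least $4r$ and edge density at least $c_0\,|V(G)|^{1/(3(r-1))}$. Taking $G$ to be any of them proves the lemma.

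The only non-elementary ingredient is the girth lower bound $k+5$ for odd $k$ and the sharp vertex-count bound for $CD(k,q)$; both rest on the algebraic definition of these graphs and are exactly what~\cite{lazebnik1995new} establishes, so we simply cite them. If one wanted a self-contained proof, re-deriving the girth bound would be the main obstacle, as it is the technical core of~\cite{lazebnik1995new}. The choice of the Lazebnik--Ustimenko--Woldar family (rather than, say, incidence graphs of generalised polygons, which only give girths $6$, $8$, $12$) is dictated by the need for arbitrarily large girth $4r$ together with a degree that is a sufficiently large polynomial in $n$; the precise parameter choice $k=4r-5$ is what makes the exponent come out to $1/(3(r-1))$ rather than something slightly weaker.
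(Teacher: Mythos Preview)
The paper does not give its own proof of this lemma; it merely states it with the citation to~\cite{lazebnik1995new} and uses it as a black box. Your proposal correctly supplies the derivation: the choice $k=4r-5$ in $CD(k,q)$ yields odd $k\ge 3$, girth $\ge k+5=4r$, and the vertex bound $n\le 2q^{3(r-1)}$, from which the edge-density estimate follows exactly as you computed. This is precisely the argument one would extract from~\cite{lazebnik1995new}, and the arithmetic is right.
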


\begin{theorem}\label{thm:nc} 
 If $\CCC$ is somewhere dense and monotone, then
$\CCC$ does not admit sparse neighbourhood covers. 
\end{theorem}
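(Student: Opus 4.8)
The plan is to contrapose the definition of nowhere denseness. Suppose $\CCC$ is monotone but somewhere dense; I must exhibit, for some fixed radius $r$, a failure of the sparse-cover property: a constant $c$ for which no matter how small we take $\epsilon>0$, there are arbitrarily large $G\in\CCC$ with no $r$-neighbourhood cover of radius $\le c$ and degree $\le |V(G)|^\epsilon$. Since $\CCC$ is somewhere dense, there is a fixed $p\in\N$ such that the $p$-subdivisions of all complete graphs $K_n$ occur as subgraphs of members of $\CCC$; because $\CCC$ is monotone, these subdivided cliques themselves lie in $\CCC$. The key point is that a $p$-subdivision of $K_n$ is a graph $H_n$ on $\Theta(n^2)$ vertices, of girth roughly $2(p+1)$, whose edge density is bounded but whose ``density at scale $p+1$'' is large: contracting each subdivision path identifies $H_n$ back with $K_n$, which has edge density $\Theta(n)=\Theta(|V(H_n)|^{1/2})$.

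First I would set $r:=1$ (or a small constant depending on $p$) and apply \cref{lem:nocover} in the right scaling. Concretely, pass to the $t$-subdivision $\widehat G$ of $G$ for a suitable $t$ tied to $p$, so that \cref{lem:nocover} applies: with girth $k+1$ and edge density $d$ on a subdivided clique, every $1$-neighbourhood cover of radius $\le k$ has degree $\ge d$. The combinatorial content I want is exactly \cref{lem:girthexists}-style: the exact subdivided cliques available in $\CCC$ have girth at least some fixed $k$ and edge density at least $c_0\cdot|V(H_n)|^{1/(3(r-1))}$ for the relevant $r$; equivalently, girth bounded below by a constant while edge density grows polynomially in the number of vertices. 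Then \cref{lem:nocover} forces the degree of any radius-$\le k$ cover to be at least this polynomially growing quantity $c_0\cdot |V(H_n)|^{\delta}$ with $\delta>0$ a fixed exponent. Choosing $\epsilon:=\delta/2$ defeats the bound $|V(H_n)|^\epsilon$ for all large $n$, and since $H_n\in\CCC$ are arbitrarily large, $\CCC$ does not admit sparse neighbourhood covers with constant $c=k$ at radius $r$.

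The one subtlety is that the definition of ``admits sparse neighbourhood covers'' lets the adversary pick the constant $c$ after $r$ but before $\epsilon$; so I must make the lower bound on the cover degree hold for \emph{every} constant radius bound $c$, not just one. This is handled by the classical high-girth/high-density trick underlying \cref{lem:girthexists}: for each target radius $c$ one selects subdivided cliques of girth exceeding, say, $4c$ and with edge density still $|V|^{\Omega(1/c)}$; monotonicity keeps them inside $\CCC$ (a somewhere dense monotone class contains all shallow subdivisions of all cliques, hence in particular these higher subdivisions). So for each $c$ the family $\{H_n^{(c)}\}$ witnesses failure at scale $\epsilon=\Omega(1/c)$. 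I expect the main obstacle to be precisely this bookkeeping — verifying that somewhere denseness yields subdivided cliques of \emph{arbitrarily large but still bounded} girth together with the matching polynomial edge-density lower bound, and that the girth/radius arithmetic in \cref{lem:nocover} lines up so that the forced cover degree genuinely exceeds $|V|^\epsilon$ for the chosen $\epsilon$. Once the parameters are set, the argument is a direct application of \cref{lem:nocover,lem:girthexists}.
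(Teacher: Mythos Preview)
Your proposal has a genuine gap in the quantifier handling of the third paragraph. The assertion that ``a somewhere dense monotone class contains all shallow subdivisions of all cliques, hence in particular these higher subdivisions'' is false. Take $\CCC$ to be the class of all bipartite graphs: it is monotone, and it is somewhere dense because the exact $1$-subdivision of every $K_n$ is bipartite, so $K_n\preceq_1^t G$ for some $G\in\CCC$. Yet the exact $2$-subdivision of $K_3$ is a $9$-cycle and is \emph{not} in $\CCC$. More generally, somewhere denseness together with monotonicity only guarantees that, for one fixed depth $s$, \emph{some} $\leq 2s$-subdivision of each graph lies in $\CCC$; you cannot raise the subdivision depth (and hence the girth) at will to match an arbitrary radius bound $c$, nor can you force any particular subdivision to appear. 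This also undermines the second paragraph: the graphs you are guaranteed to have in $\CCC$ are fixed-depth subdivisions with \emph{bounded} edge density, so \cref{lem:nocover} applied directly to them gives nothing.

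The paper resolves this by separating the two roles. It takes the high-girth, high-density graphs $H$ from \cref{lem:girthexists} (with girth chosen larger than $c\cdot r\cdot s$, where $c$ is the constant handed to you by the assumed sparse-cover property at radius $rs$), and observes that an $s$-subdivision $G$ of $H$ lies in $\CCC$ by monotonicity. The missing step in your outline is a \emph{projection lemma}: if the subdivision $G$ admits an $(rs)$-neighbourhood cover of radius $crs$ and degree $d$, then intersecting each cluster with the principal vertices yields an $r$-neighbourhood cover of $H$ with the same radius bound and degree at most $d$. Now \cref{lem:nocover} applies to $H$ (not to $G$), forcing $d\ge c_0|V(H)|^{1/(crs)}$, while the assumed cover of $G$ has degree $\le |V(G)|^{\epsilon}$ with $|V(G)|$ polynomially bounded in $|V(H)|$; choosing $\epsilon$ small enough yields the contradiction. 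In short, you should not try to place the high-girth, high-density witness itself inside $\CCC$; instead, place its bounded-depth subdivision there and pull the cover back.
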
 
\begin{proof}
Let $\CCC$ be somewhere dense. Then for some integer $s$, all graphs $H$ are topological
depth-$s$ minors of a graph $G\in\CCC$. Assume towards a contradiction that~$\CCC$ admits a sparse neighbourhood cover. Then for every $G\in\CCC$ there is
an $r\cdot s$-neighbourhood cover of radius $c\cdot r\cdot s$ (for some constant
$c$) which for every $\epsilon>0$ has degree at most $|V(G)|^\epsilon$ if $G$ is
sufficiently large. Fix some $r\geq 5$. 

\begin{Claim}\label{lem:subdiv}
If an $s$-subdivision of $H$ admits an $r\cdot s$-neighbourhood cover
of radius $c\cdot r\cdot s$ and degree $d$, then $H$ admits an $r$-neighbourhood cover of radius $c\cdot r\cdot s$ and degree $d$. 
\end{Claim}
\begin{proof}
  Let $G$ be an $s$-subdivision of $H$ and let $\XXX$ be an $r\cdot s$-neighbourhood cover of~$G$. Let~$\YYY$ be the \emph{projected cover} which for every $X\in \XXX$ has a cluster $Y(X)\coloneqq X\cap V(H)$

  Then $\YYY$ is an $r$-neighbourhood cover of radius $c\cdot r\cdot s$ and degree $d$: Clearly, every $Y(X)$ is connected and has radius at most $c\cdot r\cdot s$. Let $v\in V(G)$. There is a cluster $X\in \XXX$ such that $N_{rs}^G(v)\subseteq X$. Then $N_r^H(v)= N_{rs}^G(v) \cap V(H)\subseteq X\cap V(H)=Y(X)$. Finally, the degree of $\YYY$ is at most $d$, as every vertex $v$ of $H$ is exactly in those clusters $Y(X)$ with $v\in X$.  
$\hfill\dashv$\end{proof}

Let $H$ be a large graph of girth greater than $c\cdot r\cdot s$ with edge density $d=c_0\cdot |V(H)|^{1/(crs)}$ for some constant $c_0$. Such $H$ exists by Lemma~\ref{lem:girthexists} and~$H$ does not admit an $r\cdot s$-neighbourhood cover of radius $c\cdot r\cdot s$ and degree $d$ by Lemma~\ref{lem:nocover}. As $\CCC$ is monotone, an $s$-subdivision of $H$ is a graph $G\in\CCC$ with $|V(G)|\leq |V(H)|+s\cdot |E(H)|\leq 2c_0s|V(H)|^{1+1/(crs)}$. 

By assumption, $G$ admits an $r\cdot s$-neighbourhood cover of
radius at most $c\cdot r\cdot s$ and degree at most $|V(G)|^{\epsilon}$ for $\epsilon=1/(2crs)$ if $G$ is large enough. It
follows from Claim \ref{lem:subdiv} that $H$ has a cover of radius $c\cdot
r\cdot s$ and degree at most 
\begin{align*}
|V(G)|^{\epsilon}\leq \big(2c_0s|V(H)|^{1+1/(crs)}\big)^{\epsilon}& =(2c_0s)^{\epsilon}\cdot |V(H)|^{\epsilon+\epsilon/(crs)}\\
& <c_0|V(H)|^{2\epsilon}=c_0|V(H)|^{1/(crs)}\end{align*} 
for sufficiently large $H$. A contradiction.  \end{proof}

A similiar characterisation for classes of bounded expansion was found by \NOdM\ \cite{NesetrilM15}.

\section{The Complexity of Computing $\mathbf{wcol_r(G)}$}
\label{sec:NPcomplete}

Unlike computing the degeneracy of a graph $G$, \ie $\wcol_1(G)+1$,
deciding whether $\wcol_r(G)=k$ turns out to be NP-complete for all
$r\ge 3$. The case $r=2$ remains an open question. Clearly, the problem is in NP, hence it remains to show NP-hardness. The proof is a straightforward modification of a proof of Pothen~\cite{pothen88}, showing that computing a minimum elimination tree height problem is NP-complete. 
It is based on a reduction from the NP-complete problem \textsc{Balanced Complete Bipartite subgraph} (BCBS, problem GT24 of \cite{garey02}): given a bipartite graph $G$ and a positive integer $k$, decide whether there are two disjoint subsets $W_1,W_2\subseteq V(G)$ such that $|W_1|=|W_2|=k$ and such that $u\in W_1, v\in W_2$ implies $\{u,v\}\in E(G)$. For a graph $G$, let $\bar G$ be its complement graph.

\begin{lemma}
Let $G=(V_1\cup V_2, E)$ be a bipartite $n$-vertex graph and let
$k\in\N$. Then $G$ has a balanced complete bipartite subgraph with
partitions $W_1, W_2$ of size~$k$ if and only if
$\wcol_r(\bar{G})=\wcol_3(\bar{G})\le n-k$ for all $r\geq 3$. 
\end{lemma}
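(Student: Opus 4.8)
The plan is to prove both directions of the equivalence by exploiting the relationship between weak reachability orders of $\bar G$ and elimination/tree structures. The key observation is that a complete bipartite subgraph $K_{k,k}$ in $G$ corresponds, in the complement $\bar G$, to two disjoint independent sets $W_1, W_2$ of size $k$ with \emph{no} edges between them — i.e. $W_1 \cup W_2$ induces an empty graph in $\bar G$. Since $G$ is bipartite, $\bar G$ contains two cliques $\bar V_1, \bar V_2$ spanning all of $V(\bar G) = V_1 \cup V_2$, so $\bar G$ has a very rigid structure: every vertex has at most one non-neighbour in its ``own side'' only if... — more precisely, the non-edges of $\bar G$ all go between $V_1$ and $V_2$. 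This means $\bar G$ is a \emph{co-bipartite} graph, and co-bipartite graphs have the property that for any path of length $\ge 2$ between two vertices, there is already an edge or a length-$2$ path, because any three vertices include two on the same side, hence an edge. Consequently, for any ordering $L$ and any vertex $v$, $\Wreach_r[\bar G, L, v] = \Wreach_3[\bar G, L, v]$ for all $r \ge 3$: a weakly $r$-reachable vertex reached by a long decreasing-type path can already be reached by a path of length at most $3$ since any four consecutive vertices on the path contain two on the same side and can be short-cut. This handles the ``$\wcol_r = \wcol_3$ for all $r \ge 3$'' part of the statement and reduces everything to analysing $\wcol_3(\bar G)$.

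For the main equivalence, I would argue as follows. First I would show that $\wcol_3(\bar G) \le n - k$ is equivalent to the existence of a linear order $L$ in which some set of $k$ vertices, appearing \emph{last} in $L$, has the property that each such vertex weakly-$3$-reaches at most $n-k$ vertices. The natural order to consider places $W_1$ first, then $W_2$, then all remaining $n - 2k$ vertices (or some refinement). I would compute $|\Wreach_3[\bar G, L, v]|$ for vertices $v$ in each block: for $v$ in the last block, $\Wreach_3$ can contain at most all $n$ vertices, so we need the obstruction to come from vertices in $W_1 \cup W_2$. The point of choosing $W_1, W_2$ to be a complete bipartite subgraph of $G$ (hence a non-edge-free pair in $\bar G$) is that vertices in $W_1$ cannot reach vertices in $W_2$ by short paths that stay within $W_1 \cup W_2$, and this is exactly what keeps $|\Wreach_3|$ small — specifically, a vertex $v \in W_1$ placed at the very bottom reaches only $W_1$ itself plus possibly vertices forced by edges to the rest, and one shows this is bounded by $n-k$ precisely when $|W_2| = k$ is ``saved.'' Conversely, given an order $L$ witnessing $\wcol_3(\bar G) \le n-k$, I would extract from the bottom $2k$ vertices of $L$ (or an appropriate substructure) two sets $W_1, W_2$ that must be independent in $\bar G$ with no crossing $\bar G$-edges — because if there were such edges or non-independence, the weak-reachability count of some low vertex would exceed $n - k$ — and these then form the required $K_{k,k}$ in $G$.

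The technical heart, and the step I expect to be the main obstacle, is the precise bookkeeping in the converse direction: showing that an arbitrary order achieving $\wcol_3(\bar G) \le n - k$ must ``hide'' a balanced complete bipartite subgraph of size exactly $k$. One has to rule out the possibility that the order cleverly distributes the reachability savings across many vertices without ever producing two clean sets of size $k$; here is where I would lean on the co-bipartite structure — the two cliques $\bar V_1, \bar V_2$ force that the $\col$-type invariants and $\wcol$-type invariants of $\bar G$ are tightly linked to the elimination-ordering analysis, mirroring Pothen's argument for elimination tree height. I would formalise this by looking at the $k$ smallest vertices in $L$ (call them the ``deep'' vertices): for such a vertex $v$, every vertex of $\bar G$ at distance $\le 3$ from $v$ lies in $\Wreach_3[\bar G, L, v]$ (since $v$ is below everything relevant), so $|\Wreach_3[\bar G,L,v]| \ge |N^{\bar G}_3[v]| = |N^{\bar G}_1[v]|$ by co-bipartiteness — wait, more carefully, $\le 3$ collapses but one must track which vertices are genuinely reachable; the bound $\le n-k$ then forces $v$ to have at least $k$ non-neighbours in $\bar G$, and a counting/pigeonhole argument over the deep vertices, combined with the bipartition of $G$, yields the two blocks $W_1, W_2$. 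I would present the forward direction (constructing the order from a given $K_{k,k}$) in full detail since it is cleaner, and handle the converse by carefully adapting Pothen's reduction, citing \cite{pothen88} for the structural lemma about elimination orderings of co-bipartite graphs and filling in the translation to weak $3$-reachability.
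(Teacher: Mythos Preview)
Your high-level plan is sound: co-bipartiteness of $\bar G$ collapses $\wcol_r$ to $\wcol_3$ via the sub-path argument, and the equivalence then comes down to constructing or extracting a good order. But you have the orientation of the linear order reversed in both directions, and this is a genuine gap, not just a labelling issue.

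In the forward direction you place $W_1$ first, then $W_2$, then the remaining $n-2k$ vertices. This does not work: the vertices at the \emph{top} of an order are the ones with large weak-reach sets, and a top vertex $v\in V_1\setminus W_1$ is adjacent in $\bar G$ to all of $V_1$ and will typically weakly $3$-reach more than $n-k$ vertices. Concretely, take $V_1=\{a_1,a_2\}$, $V_2=\{b_1,b_2\}$, $E(G)=\{a_1b_1\}$, $k=1$, $W_1=\{a_1\}$, $W_2=\{b_1\}$; under your order the last vertex is adjacent in $\bar G$ to all three smaller vertices, giving $|\Wreach_3|=4>n-k=3$. The correct order (as in the paper) is the reverse: put $V(\bar G)\setminus(W_1\cup W_2)$ \emph{first}, then $W_1$, then $W_2$. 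Now the largest vertices lie in $W_2$, and the absence of $\bar G$-edges between $W_1$ and $W_2$ means no $v\in W_2$ can weakly reach any $u\in W_1$ (any path staying above $u$ would have to use only $W_1\cup W_2$, where there is no $W_1$--$W_2$ edge), so $|\Wreach_3[v]|\le (n-2k)+k=n-k$.

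The same reversal appears in your converse. You look at the $k$ \emph{smallest} vertices and assert $\Wreach_3[\bar G,L,v]\supseteq N_3^{\bar G}[v]$ ``since $v$ is below everything''; but $\Wreach_3[\bar G,L,v]$ contains only vertices $u$ with $L(u)\le L(v)$, so for the minimum vertex it equals $\{v\}$ and the bound $n-k$ is vacuous there. The constraint bites at the top. The paper's argument works with suffixes $\bar G_i=\bar G[\{v_i,\dots,v_n\}]$: take $\ell$ minimal with no $\bar G$-edge between $V_1^\ell=V_1\cap V(\bar G_\ell)$ and $V_2^\ell$, so these already form a complete bipartite subgraph of $G$. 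For every $j<\ell$ the suffix $\bar G_j$ is a connected co-bipartite graph, hence of diameter at most $3$; thus the $L$-maximal vertex $w_i$ of $V_i^\ell$ weakly $3$-reaches all of $\{v_1,\dots,v_{\ell-1}\}$ together with all of $V_i^\ell$. Feeding $|\Wreach_3[w_i]|\le n-k$ and $|V_1^\ell|+|V_2^\ell|=n-\ell+1$ into each other yields $|V_i^\ell|\ge k$ for both $i$. This is self-contained; once you analyse the right end of the order you do not need to import a separate structural lemma from \cite{pothen88}.
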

\begin{proof}
$\bar{G}$ is the complement of a bipartite graph, \ie $V_1$ and~$V_2$ induce complete subgraphs in $\bar{G}$ and there are possibly further edges between vertices of $V_1$ and~$V_2$. Thus, for any two vertices $u,v$ which are connected in $\bar{G}$ by a path $P$, there is a subpath of $P$ between~$u$ and~$v$ of length at most $3$. Hence $\wcol_r(\bar{G})=\wcol_3(\bar{G})$ for any $r\geq 3$ and it suffices to show that~$G$ has a balanced complete bipartite subgraph with partitions $W_1, W_2$ of size $k$ if and only $\wcol_3(\bar{G})=n-k$.

First assume that there are sets $W_1\subseteq V_1, W_2\subseteq V_2$ with $|W_1|=|W_2|=k$ and such that for all $u\in W_1, v\in W_2$ there is an edge $\{u,v\}\in E(G)$. Let $L$ be some order which satisfies $L(u)<L(v)$ if $u\in V(\bar{G})\setminus (W_1\cup W_2)$ and $v\in W_1\cup W_2$ and $L(v)< L(w)$ if $v\in W_1$ and $w\in W_2$. Then any vertex from $V(\bar{G})\setminus (W_1\cup W_2)$ weakly reaches at most $n-2k$ vertices and any vertex from $W_i$ for $1\leq i\leq 2$ weakly reaches at most $n-k$ vertices. 

Now let $L$ be an order with $\Wreach_3[\bar{G},L, v]\leq
n-k$ for all $v\in V(G)$. Assume without loss of generality that $V(G)
= \{v_1,v_2,\ldots,v_n\}$ with $L(v_i)< L(v_{i+1})$ for all $i<n$.
Denote by~$\bar{G}_i$ the subgraph $\bar{G}[\{v_i,\ldots, v_n\}]$ and let
$V_1^i \coloneqq V(\bar{G}_i)\cap V_1$ and $V_2^i\coloneqq V(\bar{G}_i)\cap
V_2$. Let $\ell\geq 1$ be minimal such that there is no edge between
$V_1^\ell$ and $V_2^\ell$ in $\bar{G}$. It exists because one of $V^n_1$ or $V^n_2$ is empty.
Clearly, $V_1^\ell$ and
$V_2^\ell$ induce a complete bipartite graph in $G$. Let $j_1\coloneqq|V_1^\ell|$
and $j_2\coloneqq|V_2^\ell|$. We show that $j_1,j_2\geq k$. It is easy to see that
$\Wreach_3[\bar{G},L, w_1]\leq \ell+j_1$ for the maximal element $w_1\in
V_1^\ell$ and $\Wreach_3[\bar{G},L, w_2]\leq \ell+j_2$ for the maximal element
$w_2\in V_2^\ell$. We have $j_1+j_2=n-\ell$ and, without loss of generality,
$\ell+j_1\leq \ell+j_2\leq n-k$. Hence $j_1\leq j_2\leq n-\ell-k=j_1+j_2-k$,
which implies both $j_1\geq k$ and $j_2\geq k$.
\end{proof}

The above reduction is polynomial time computable, so we obtain the following theorem. 

\begin{theorem}
Given a graph $G$ and $k,r\in\N, r\geq 3$, it is NP-complete to decide whether $\wcol_r(G)=k$.
\end{theorem}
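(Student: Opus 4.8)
The plan is to combine membership in NP with a polynomial-time reduction from \textsc{Balanced Complete Bipartite Subgraph}, for which the preceding lemma already does all the real work. For membership in NP I would take a linear order $L$ on $V(G)$ as the certificate: given $L$, one computes $|\Wreach_r[G,L,v]|$ for every $v\in V(G)$ in polynomial time, since no vertex $u$ with $L(u)>L(v)$ can be weakly $r$-reachable from $v$, while a vertex $u$ with $L(u)\le L(v)$ lies in $\Wreach_r[G,L,v]$ exactly when $\dist(v,u)\le r$ inside the subgraph of $G$ induced by $\{w\in V(G):L(w)\ge L(u)\}$, so one bounded-radius search per candidate $u$ suffices. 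Hence $\max_{v\in V(G)}|\Wreach_r[G,L,v]|$ is computable from $L$ in polynomial time and the equality with $k$ is verifiable, so the problem is in NP.

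For NP-hardness, fix any $r\ge 3$ and reduce from \textsc{Balanced Complete Bipartite Subgraph} (problem GT24 of \cite{garey02}), which is NP-complete. Given a bipartite graph $G=(V_1\cup V_2,E)$ on $n$ vertices and an integer $k$, the reduction outputs the triple $(\bar G,\,n-k,\,r)$; both the complement graph $\bar G$ and the integer $n-k$ are computable from $(G,k)$ in polynomial time. By the preceding lemma, $G$ has a balanced complete bipartite subgraph with both parts of size $k$ if and only if $\wcol_r(\bar G)=\wcol_3(\bar G)\le n-k$, which is precisely what the output instance asks. Consequently deciding $\wcol_r(G)=k$ is NP-hard for the fixed value $r$, and since $r\ge 3$ was arbitrary this holds for every fixed $r\ge 3$; together with membership in NP this yields NP-completeness.

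I do not expect a genuine obstacle here. All the substantive content is already isolated in the preceding lemma, whose proof exploits the fact that $\bar G$ is the complement of a bipartite graph, so that $V_1$ and $V_2$ each induce a clique in $\bar G$ and any two vertices joined by a path are joined by one of length at most $3$ — this is what makes $\wcol_r(\bar G)$ stabilise already at $r=3$ and ties its value to the size of a balanced complete bipartite subgraph of $G$. The only things left to check in the theorem's proof are that a linear order can be verified in polynomial time and that the map $(G,k)\mapsto(\bar G,n-k,r)$ is polynomial-time computable, both of which are immediate bookkeeping.
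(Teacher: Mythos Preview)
Your proposal is correct and follows the paper's approach exactly: the paper's own proof is the single sentence ``The above reduction is polynomial time computable, so we obtain the following theorem,'' with NP-membership dismissed earlier as ``clearly'' true, and you have merely spelled out both the verification of a certificate order and the reduction map $(G,k)\mapsto(\bar G,n-k)$ in more detail. As you observe, all the substantive work is already contained in the preceding lemma.
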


\section{Concluding Remarks}
We have studied generalised colouring numbers of graphs and proved new upper and lower bounds. These colouring numbers can be used to characterise nowhere dense graph classes, and they are directly related to bounds on the degrees of neighbourhood covers for such classes.

\pagebreak
For graph classes of bounded expansion, the colouring numbers do not
depend on the size of the input graphs, and we may view them as
functions of $r$. For classes of treewidth at most $k$ we have
obtained a polynomial bound, roughly~$r^k$,  for the weak colouring
number, whereas for classes that exclude a fixed graph as a
topological minor we have an exponential bound
$\wcol_r(G)\in\Oof((c_{\CCC}\cdot r)^r)$, where $c_{\CCC}$ is a constant depending only on the class $\CCC$.
Our lower
bound for bounded degree classes shows that we cannot hope to improve
the latter bound substantially. Finally, we negatively answered a question of Joret and Wood, showing
that there exist classes of polynomial expansion which have super-polynomial weak colouring numbers.

\section*{Acknowledgements} We thank Micha\l~Pilipczuk and Felix Reidl for
useful discussions on graph classes of polynomial expansion. 

\bibliographystyle{siamplain}
\bibliography{adm}

\end{document}